\documentclass{amsart}

\makeatletter
\@namedef{subjclassname@2020}{%
  \textup{2020} Mathematics Subject Classification}
 
\makeatother 

\usepackage{amsthm,amssymb,amsfonts,latexsym,mathtools,thmtools,amsmath,lscape}
\usepackage[T1]{fontenc}
\usepackage{tikz-cd} 
\usepackage{enumitem} 
\usepackage{longtable}
\usepackage{multirow}
\usepackage{caption}
\usepackage{mathrsfs} 
\usepackage{hyperref} 
\usepackage{hyperref}
\usepackage{float}
\usepackage[all]{xy}
\usepackage{booktabs}
\usepackage{tabularx}
\usepackage{array} 

\hypersetup{
    colorlinks=true,
    linkcolor=blue,
    filecolor=blue,      
    urlcolor=blue,
    linktocpage=true
}

\newtheorem{theorem}{Theorem}[section]

\newtheorem{proposition}[theorem]{Proposition}

\theoremstyle{definition}
\newtheorem{definition}[theorem]{Definition}
\newtheorem{corollary}[theorem]{Corollary}
\newtheorem{remark}[theorem]{Remark}

\newtheorem{example}[theorem]{Example}

\theoremstyle{remark}

\numberwithin{equation}{section}

\begin{document}

\title[Noncommutative geometry of ambiskew polynomial rings]{Noncommutative differential geometry \\ of ambiskew polynomial rings}

\author{Andr\'es Rubiano}
\address{Universidad Distrital Francisco Jos\'e de Caldas}
\curraddr{Campus Universitario}
\email{aarubianos@udistrital.edu.co - ORCID: 0009-0009-1633-8018} 

\author{Armando Reyes}
\address{Universidad Nacional de Colombia - Sede Bogot\'a}
\curraddr{Campus Universitario}
\email{mareyesv@unal.edu.co - ORCID: 0000-0002-5774-0822}

\thanks{The second author was supported by Faculty of Science, Universidad Nacional de Colombia - Sede Bogot\'a, Colombia [grant number 65488].}

\subjclass[2020]{16E45, 16S30, 16S32, 16S36, 16S38, 58B34}

\keywords{Differentially smooth algebra, integrable calculus, ambiskew polynomial ring}

\date{}

\dedicatory{Dedicated to Professor Oswaldo Lezama on the Occasion of His 70th Birthday}

\begin{abstract} 

We determine sufficient criteria for the differential smoothness of ambiskew polynomial rings defined and studied by D. A. Jordan in several papers \cite{FishJordan2019, Jordan1993b, Jordan2000, JordanWells2013}.

\end{abstract}

\maketitle


\section{Introduction}

Following Almulhem and Brzezi{\'n}ski \cite[Section 1]{AlmulhemBrzezinski2019}, \textquotedblleft Determining which classes of non-commutative algebras correspond to smooth non-commutative varieties or manifolds is one of the outstanding problems of non-commutative geometry\textquotedblright. With this aim, Brzezi{\'n}ski and Sitarz \cite{BrzezinskiSitarz2017} argued that a method to construct a suitable graded differential algebra - a differential structure - should determine {\em smoothness} of a non-commutative variety: this kind of smoothness is referred to as {\em differential}. The idea behind the {\em differential smoothness} of algebras is rooted in the observation that a classical smooth orientable manifold, in addition to de Rham complex of differential forms, admits also the complex of {\em integral forms} isomorphic to the de Rham complex \cite[Section 4.5]{Manin1997}. Since its introduction, Brzezi{\'n}ski and several authors (e.g. \cite{Brzezinski2014}--\cite{BrzezinskiSitarz2017} and \cite{Karacuha2015, KaracuhaLomp2014, ReyesSarmiento2022, RubianoReyes2024DSSPBWKt, RubianoReyes2024DSSPBWNote}) have characterized the differential smoothness of different families of noncommutative algebras (see Section \ref{DefinitionsandpreliminariesDSA} for more details). 

On the other hand, in a series of papers started in \cite{Jordan1993}, Jordan introduced a class of {\em certain iterated skew polynomial rings} (in the Ore's sense \cite{Ore1933}) $K[x; \sigma][y; \sigma^{-1}, \delta]$ on two indeterminates $x, y$ over a commutative ring $K$ which include the universal enveloping algebra of the Lie algebra $\mathfrak{sl}_2(\mathbb{C})$ and several quantum groups. Two years later, in \cite{Jordan1995} he extended this class of rings by considering an extra parameter $\rho \in \Bbbk \ \backslash \ \{0\}$ and includes the quantized Weyl algebra, the universal enveloping algebra of the Dispin Lie superalgebra, and an algebra introduced by Woronowicz \cite{Woronowicz1987}. Related with that, in his paper \cite{Jordan2000} he defined the {\em ambiskew polynomial rings} which are closely related to the {\em generalized Weyl algebras} defined and studied by Bavula \cite{Bavula1992, BavulaJordan2001}, and the {\em down-up algebras} defined by Benkart and Roby \cite{Benkart1998, BenkartRoby1998}. 

Recently, in \cite{Rubiano2026}--\cite{RubianoReyes20263D} the authors have considered the question on the differential smoothness of different families of noncommutative algebras. Our purpose in this paper is to investigate this smoothness in the setting of the ambiskew polynomial rings. Just as Almulhem and Brzezi{\'n}ski said \cite[Section 1]{AlmulhemBrzezinski2019}, \textquotedblleft Despite some recent progress in uncovering functorial ways of checking differential smoothness \cite{BrzezinskiLomp2018} one needs to study algebras on case by case basis and construct suitable differential and integral complexes\textquotedblright. We keep this observation in mind and present all the details of the differential smoothness of the algebras of interest. In this sense, this paper is a natural continuation of the study carried out by the authors for families of noncommutative polynomial algebras on three generators (for more details, see \cite{ReyesSarmiento2022} and \cite{RubianoReyes2024DSBiquadraticAlgebras} -- \cite{RubianoReyes20263D}), and contributes to the research on the non-commutative geometry of different types of algebras. 

\noindent {\bf Organization of the article.} In Section \ref{PreliminariesDifferentialsmoothnessofbi-quadraticalgebras} we consider the preliminaries on differential smoothness of algebras and ambiskew polynomial rings, in order to set up notation and render this paper self-contained. Section \ref{Differentialandintegralcalculusbi-quadraticalgebras} contains the original results of the paper: Theorems \ref{NoSigma} and \ref{SiSigma}. In Section \ref{Illustrativeexamplesambiskew} we present some noncommutative ring families that can be expressed as ambiskew polynomial rings, in order to discuss the applicability of the results obtained in the previous section. Finally, Section \ref{Futurework} presents some ideas for an immediate future work that continues the study carried out here.

Throughout the paper, $\mathbb{N}$ denotes the set of natural numbers including zero. The word ring means an associative ring with identity not necessarily commutative. $Z(R)$ denotes the center of the ring $R$. All vector spaces and algebras (always associative and with unit) are over a fixed field $\Bbbk$. $K$ is used to denote a commutative $\Bbbk$-algebra. $\Bbbk^{\times}$ denotes the non-zero elements of $\Bbbk$. As usual, the symbols $\mathbb{R}$ and $\mathbb{C}$ denote the fields of real and complex numbers, respectively. For a $\Bbbk$-algebra $R$, ${\rm Aut}_{\Bbbk}(R)$ is the set of $\Bbbk$-automorphisms of $R$.

\section{Definitions and Preliminaries}\label{PreliminariesDifferentialsmoothnessofbi-quadraticalgebras}

\subsection{Differential smoothness of algebras}\label{DefinitionsandpreliminariesDSA}

See Brzezi\'nski et al. \cite[Section 2]{BrzezinskiSitarz2017} or \cite{Brzezinski2008, Brzezinski2014, BrzezinskiElKaoutitLomp2010}.

\begin{definition}[{\cite[Section 2.1]{BrzezinskiSitarz2017}}]
\begin{enumerate}
    \item [\rm (i)] A {\em differential graded algebra} is a non-negatively graded algebra $\Omega$ with the product denoted by $\wedge$ together with a degree-one linear map $d:\Omega^{\bullet} \to \Omega^{\bullet +1}$ that satisfies the graded Leibniz's rule and is such that $d \circ d = 0$. 
    
    \item [\rm (ii)] A differential graded algebra $(\Omega, d)$ is a {\em calculus over an algebra} $A$ if 
    $$
    \Omega^0 A = A \quad {\rm and} \quad \Omega^n A = A\ dA \wedge dA \wedge \dotsb \wedge dA \quad (dA \ {\rm appears} \ n-{\rm times}), 
    $$
    
    for all $n\in \mathbb{N}$ (this last is called the {\em density condition}). We write $(\Omega A, d)$ with $\Omega A = \bigoplus_{n\in \mathbb{N}} \Omega^{n}A$. By using the Leibniz's rule, it follows that 
    $$
    \Omega^n A = dA \wedge dA \wedge \dotsb \wedge dA\ A.
    $$
    
    A differential calculus $\Omega A$ is called {\em connected} if ${\rm ker}(d\mid_{\Omega^0 A}) = \Bbbk$.
    
    \item [\rm (iii)] A calculus $(\Omega A, d)$ is said to have {\em dimension} $n$ if $\Omega^n A\neq 0$ and $\Omega^m A = 0$ for all $m > n$. An $n$-dimensional calculus $\Omega A$ {\em admits a volume form} if $\Omega^n A$ is isomorphic to $A$ as a left and right $A$-module. 
\end{enumerate}
\end{definition}

The existence of a right $A$-module isomorphism means that there is a free generator, say $\omega$, of $\Omega^n A$ (as a right $A$-module), i.e. $\omega \in \Omega^n A$, such that all elements of $\Omega^n A$ can be uniquely expressed as $\omega a$ with $a \in A$. If $\omega$ is also a free generator of $\Omega^n A$ as a left $A$-module, it is said to be a {\em volume form} on $\Omega A$.

The right $A$-module isomorphism $\Omega^n A \to A$ corresponding to a volume form $\omega$ is denoted by $\pi_{\omega}$, i.e.
\begin{equation}\label{BrzezinskiSitarz2017(2.1)}
\pi_{\omega} (\omega a) = a, \quad {\rm for\ all}\ a\in A.
\end{equation}

Using that $\Omega^n A$ is also isomorphic to $A$ as a left $A$-module, any free generator $\omega $ induces an algebra endomorphism $\nu_{\omega}$ of $A$ by the formula
\begin{equation}\label{BrzezinskiSitarz2017(2.2)}
    a \omega = \omega \nu_{\omega} (a).
\end{equation}

Note that if $\omega$ is a volume form, then $\nu_{\omega}$ is an algebra automorphism.

Next, we recall the key ingredients of the {\em integral calculus} on $A$ as dual to its differential calculus. 

Let $(\Omega A, d)$ be a differential calculus on $A$. The space of $n$-forms $\Omega^n A$ is an $A$-bimodule. Consider $\mathcal{I}_{n}A$ the right dual of $\Omega^{n}A$, the space of all right $A$-linear maps $\Omega^{n}A\rightarrow A$, that is, $\mathcal{I}_{n}A := {\rm Hom}_{A}(\Omega^{n}(A),A)$. Notice that each of the $\mathcal{I}_{n}A$ is an $A$-bimodule with the actions given by
\begin{align*}
    (a\cdot\phi\cdot b)(\omega)=a\phi(b\omega),\quad {\rm for\ all}\ \phi \in \mathcal{I}_{n}A,\ \omega \in \Omega^{n}A\ {\rm and}\ a,b \in A.
\end{align*}

The direct sum of all the $\mathcal{I}_{n}A$, that is, $\mathcal{I}A = \bigoplus_{n} \mathcal{I}_n A$, is a right $\Omega A$-module with action given by
\begin{align}\label{BrzezinskiSitarz2017(2.3)}
    (\phi\cdot\omega)(\omega')=\phi(\omega\wedge\omega'),\quad {\rm for\ all}\ \phi\in\mathcal{I}_{n + m}A, \ \omega\in \Omega^{n}A \ {\rm and} \ \omega' \in \Omega^{m}A.
\end{align}

\begin{definition}[{\cite[Definition 2.1]{Brzezinski2008}}]
A {\em divergence} (also called {\em hom-connection}) on $A$ is a linear map $\nabla: \mathcal{I}_1 A \to A$ such that
\begin{equation}\label{BrzezinskiSitarz2017(2.4)}
    \nabla(\phi \cdot a) = \nabla(\phi) a + \phi(da), \quad {\rm for\ all}\ \phi \in \mathcal{I}_1 A \ {\rm and} \ a \in A.
\end{equation}  
\end{definition}

Note that a divergence can be extended to the whole of $\mathcal{I}A$, 
\[
\nabla_n: \mathcal{I}_{n+1} A \to \mathcal{I}_{n} A,
\]

by considering
\begin{equation}\label{BrzezinskiSitarz2017(2.5)}
\nabla_n(\phi)(\omega) = \nabla(\phi \cdot \omega) + (-1)^{n+1} \phi(d \omega), \quad {\rm for\ all}\ \phi \in \mathcal{I}_{n+1}(A)\ {\rm and} \ \omega \in \Omega^n A.
\end{equation}

By putting together (\ref{BrzezinskiSitarz2017(2.4)}) and (\ref{BrzezinskiSitarz2017(2.5)}), we get the Leibniz's rule 
\begin{equation}
    \nabla_n(\phi \cdot \omega) = \nabla_{m + n}(\phi) \cdot \omega + (-1)^{m + n} \phi \cdot d\omega,
\end{equation}

for all elements $\phi \in \mathcal{I}_{m + n + 1} A$ and $\omega \in \Omega^m A$ \cite[Lemma 3.2]{Brzezinski2008}. In the case $n = 0$, if ${\rm Hom}_A(A, M)$ is canonically identified with $M$, then $\nabla_0$ reduces to the classical Leibniz's rule.

\begin{definition}[{\cite[Definition 3.4]{Brzezinski2008}}]
The right $A$-module map 
\[
F = \nabla_0 \circ \nabla_1: {\rm Hom}_A(\Omega^{2} A, M) \to M
\] 

is called a {\em curvature} of a hom-connection $(M, \nabla_0)$. $(M, \nabla_0)$ is said to be {\em flat} if its curvature is the zero map, that is, if $\nabla \circ \nabla_1 = 0$. This condition implies that $\nabla_n \circ \nabla_{n+1} = 0$ for all $n\in \mathbb{N}$.
\end{definition}

$\mathcal{I} A$ together with the $\nabla_n$ form a chain complex called the {\em complex of integral forms} over $A$. The cokernel map of $\nabla$, that is, $\Lambda: A \to {\rm Coker} \nabla = A / {\rm Im} \nabla$ is said to be the {\em integral on $A$ associated to} $\mathcal{I}A$.

Given a left $A$-module $X$ with action $a\cdot x$ for all $a\in A,\ x \in X$, and an algebra automorphism $\nu$ of $A$, the notation $^{\nu}X$ stands for $X$ with the $A$-module structure twisted by $\nu$, i.e. with the $A$-action $a\otimes x \mapsto \nu(a)\cdot x $.

The following definition of an \textit{integrable differential calculus} seeks to portray a version of Hodge star isomorphisms between the complex of differential forms of a differentiable manifold and a complex of dual modules of it \cite[p. 112]{Brzezinski2015}. 

\begin{definition}[{\cite[Definition 2.1]{BrzezinskiSitarz2017}}]
An $n$-dimensional differential calculus $(\Omega A, d)$ is said to be {\em integrable} if $(\Omega A, d)$ admits a complex of integral forms $(\mathcal{I}A, \nabla)$, for which there exist an algebra automorphism $\nu$ of $A$ and $A$-bimodule isomorphisms \linebreak $\Theta_k: \Omega^{k} A \to ^{\nu} \mathcal{I}_{n-k}A$, $k = 0, \dotsc, n$, rendering commmutative the following diagram:
\[
\begin{tikzcd}
A \arrow{r}{d} \arrow{d}{\Theta_0} & \Omega^{1} A \arrow{d}{\Theta_1} \arrow{r}{d} & \Omega^2 A  \arrow{d}{\Theta_2} \arrow{r}{d} & \dotsb \arrow{r}{d} & \Omega^{n-1} A \arrow{d}{\Theta_{n-1}} \arrow{r}{d} & \Omega^n A  \arrow{d}{\Theta_n} \\ ^{\nu} \mathcal{I}_n A \arrow[swap]{r}{\nabla_{n-1}} & ^{\nu} \mathcal{I}_{n-1} A \arrow[swap]{r}{\nabla_{n-2}} & ^{\nu} \mathcal{I}_{n-2} A \arrow[swap]{r}{\nabla_{n-3}} & \dotsb \arrow[swap]{r}{\nabla_{1}} & ^{\nu} \mathcal{I}_{1} A \arrow[swap]{r}{\nabla} & ^{\nu} A
\end{tikzcd}
\]

The $n$-form $\omega:= \Theta_n^{-1}(1)\in \Omega^n A$ is called an {\em integrating volume form}. 
\end{definition}

The algebra of complex matrices $M_n(\mathbb{C})$ with the $n$-dimensional calculus generated by derivations presented by Dubois-Violette et al. \cite{DuboisViolette1988, DuboisVioletteKernerMadore1990}, the quantum group $SU_q(2)$ with the three-dimensional left covariant calculus developed by Woronowicz \cite{Woronowicz1987} and the quantum standard sphere with the restriction of the above calculus, are examples of algebras admitting integrable calculi. 

The following proposition shows that the integrability of a differential calculus can be defined without explicit reference to integral forms. This allows us to guarantee the integrability by considering the existence of finitely generator elements that allow to determine left and right components of any homogeneous element of $\Omega(A)$.

\begin{proposition}[{\cite[Theorem 2.2]{BrzezinskiSitarz2017}}]\label{integrableequiva} 
Let $(\Omega A, d)$ be an $n$-dimensional differential calculus over an algebra $A$. The following assertions are equivalent:
\begin{enumerate}
    \item [\rm (1)] $(\Omega A, d)$ is an integrable differential calculus.
    
    \item [\rm (2)] There exists an algebra automorphism $\nu$ of $A$ and $A$-bimodule isomorphisms 
    $$\Theta_k : \Omega^k A \rightarrow \ ^{\nu}\mathcal{I}_{n-k}A, \quad k =0, \ldots, n,
    $$
    
    such that for all $\omega'\in \Omega^k A$ and $\omega''\in \Omega^mA$, we have that 
    \begin{align*}
        \Theta_{k+m}(\omega'\wedge\omega'')=(-1)^{(n-1)m}\Theta_k(\omega')\cdot\omega''.
    \end{align*}
    
    \item [\rm (3)] There exists an algebra automorphism $\nu$ of $A$ and an $A$-bimodule map $\vartheta:\Omega^nA\rightarrow\ ^{\nu}A$ such that all left multiplication maps
    \begin{align*}
    \ell_{\vartheta}^{k}:\Omega^k A &\ \rightarrow \mathcal{I}_{n-k}A, \\
    \omega' &\ \mapsto \vartheta\cdot\omega', \quad k = 0, 1, \dotsc, n,
    \end{align*}
    where the actions $\cdot$ are defined by {\rm (}\ref{BrzezinskiSitarz2017(2.3)}{\rm )}, are bijective.
    
    \item [\rm (4)] $(\Omega A, d)$ has a volume form $\omega$ such that all left multiplication maps
    \begin{align*}
        \ell_{\pi_{\omega}}^{k}:\Omega^k A &\ \rightarrow \mathcal{I}_{n-k}A, \\
        \omega' &\ \mapsto \pi_{\omega} \cdot \omega', \quad k=0,1, \dotsc, n-1,
    \end{align*}
    
    where $\pi_{\omega}$ is defined by {\rm (}\ref{BrzezinskiSitarz2017(2.1)}{\rm )}, are bijective.
\end{enumerate}
\end{proposition}

A volume form $\omega\in \Omega^nA$ is an {\em integrating form} if and only if it satisfies condition $(4)$ of Proposition \ref{integrableequiva} \cite[Remark 2.3]{BrzezinskiSitarz2017}.

The most interesting cases of differential calculi are those where $\Omega^k A$ are finitely generated and projective right or left (or both) $A$-modules \cite{Brzezinski2011}.

\begin{proposition}\label{BrzezinskiSitarz2017Lemmas2.6and2.7}
\begin{enumerate}
\item [\rm (1)] \cite[Lemma 2.6]{BrzezinskiSitarz2017} Consider $(\Omega A, d)$ an integrable and $n$-dimensional calculus over $A$ with integrating form $\omega$. Then $\Omega^{k} A$ is a finitely generated projective right $A$-module if there exist a finite number of forms $\omega_i \in \Omega^{k} A$ and $\overline{\omega}_i \in \Omega^{n-k} A$ such that, for all $\omega' \in \Omega^{k} A$, we have that 
\begin{equation*}
\omega' = \sum_{i} \omega_i \pi_{\omega} (\overline{\omega}_i \wedge \omega').
\end{equation*}

\item [\rm (2)] \cite[Lemma 2.7]{BrzezinskiSitarz2017} Let $(\Omega A, d)$ be an $n$-dimensional calculus over $A$ admitting a volume form $\omega$. Assume that for all $k = 1, \ldots, n-1$, there exists a finite number of forms $\omega_{i}^{k},\overline{\omega}_{i}^{k} \in \Omega^{k}(A)$ such that for all $\omega'\in \Omega^kA$, we have that
\begin{equation*}
\omega'=\displaystyle\sum_i\omega_{i}^{k}\pi_\omega(\overline{\omega}_{i}^{n-k}\wedge\omega')=\displaystyle\sum_i\nu_{\omega}^{-1}(\pi_\omega(\omega'\wedge\omega_{i}^{n-k}))\overline{\omega}_{i}^{k},
\end{equation*}

where $\pi_{\omega}$ and $\nu_{\omega}$ are defined by {\rm (}\ref{BrzezinskiSitarz2017(2.1)}{\rm )} and {\rm (}\ref{BrzezinskiSitarz2017(2.2)}{\rm )}, respectively. Then $\omega$ is an integral form and all the $\Omega^{k}A$ are finitely generated and projective as left and right $A$-modules.
\end{enumerate}
\end{proposition}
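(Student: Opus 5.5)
The plan is to treat the two parts separately. Both rest on the dual basis lemma together with the elementary twisting property of $\pi_\omega$. That property is: since $b\omega a = \omega\,\nu_\omega(b)a$, we have $\pi_\omega(b\theta) = \nu_\omega(b)\pi_\omega(\theta)$ for all $b\in A$ and $\theta\in\Omega^nA$. Moreover $\pi_\omega$ is right $A$-linear by (\ref{BrzezinskiSitarz2017(2.1)}).

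For part (1), I define for each $i$ the map $f_i:\Omega^kA\to A$ by $f_i(\omega') = \pi_\omega(\overline{\omega}_i\wedge\omega')$. It is right $A$-linear, because the wedge product is right $A$-linear in its second argument and $\pi_\omega$ is right $A$-linear. The hypothesis then reads $\omega' = \sum_i \omega_i f_i(\omega')$. This is exactly a finite dual basis $\{(\omega_i,f_i)\}$ for the right module $\Omega^kA$, so the dual basis lemma gives that $\Omega^kA$ is finitely generated projective. Only the volume form is used here, not the full integrability.

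For part (2), I aim to verify condition (4) of Proposition \ref{integrableequiva}, that is, that each $\ell^k_{\pi_\omega}:\omega'\mapsto \pi_\omega\cdot\omega'$ is bijective, where $(\pi_\omega\cdot\omega')(\omega'') = \pi_\omega(\omega'\wedge\omega'')$.

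\emph{Case $k=0$.} The map sends $a\mapsto \pi_\omega(a\,\cdot\,)$, and $\pi_\omega(a\theta)=\nu_\omega(a)\pi_\omega(\theta)$. Because $\Omega^nA\cong A$ via $\omega$ on both sides and $\nu_\omega$ is an automorphism, this map is a bijection $A\to {\rm Hom}_A(\Omega^nA,A)\cong A$.

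\emph{Case $1\le k\le n-1$, injectivity.} If $\pi_\omega(\omega'\wedge\omega'')=0$ for all $\omega''\in\Omega^{n-k}A$, then in particular each $\pi_\omega(\omega'\wedge\omega_i^{n-k})$ vanishes. The second expansion $\omega'=\sum_i\nu_\omega^{-1}(\pi_\omega(\omega'\wedge\omega_i^{n-k}))\overline{\omega}_i^k$ then forces $\omega'=0$.

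\emph{Case $1\le k\le n-1$, surjectivity.} Given $\phi\in\mathcal{I}_{n-k}A$, I propose the preimage
$$\omega' = \sum_i \nu_\omega^{-1}\bigl(\phi(\omega_i^{n-k})\bigr)\,\overline{\omega}_i^{k}.$$
For $\omega''\in\Omega^{n-k}A$, the twisting property gives $\pi_\omega(\omega'\wedge\omega'') = \sum_i \phi(\omega_i^{n-k})\,\pi_\omega(\overline{\omega}_i^{k}\wedge\omega'')$. Now apply the first expansion hypothesis in degree $n-k$, namely $\omega''=\sum_i\omega_i^{n-k}\pi_\omega(\overline{\omega}_i^{k}\wedge\omega'')$. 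Right $A$-linearity of $\phi$ shows that the sum above equals $\phi(\omega'')$.

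Hence $\omega$ is an integrating form. Finite generation and projectivity follow on both sides. On the right this is part (1), applied in each degree, with the degrees $0$ and $n$ being free of rank one. On the left, the maps $\omega'\mapsto\nu_\omega^{-1}\pi_\omega(\omega'\wedge\omega_i^{n-k})$ are left $A$-linear by the twisting property, and together with the $\overline{\omega}_i^k$ they form a left dual basis.

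The main obstacle is the index bookkeeping. The hypotheses must be used in complementary degrees, $k$ for injectivity and $n-k$ for surjectivity. The sets of forms must also be matched correctly, so that the pairs $(\omega_i^{n-k},\overline{\omega}_i^{k})$ appearing in the two expansions are the same family. I would also check carefully the sign and order conventions of $\pi_\omega\cdot\omega'$ against (\ref{BrzezinskiSitarz2017(2.3)}).
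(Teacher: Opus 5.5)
Your proposal is correct and is essentially the standard argument of Brzezi\'nski--Sitarz (the paper gives no proof of its own, only this citation): part (1) is the dual basis lemma with $f_i=\pi_\omega(\overline{\omega}_i\wedge\,\cdot\,)$, and in part (2) the map $\phi\mapsto\sum_i\nu_\omega^{-1}(\phi(\omega_i^{n-k}))\,\overline{\omega}_i^{k}$ is the two-sided inverse of $\ell^k_{\pi_\omega}$, with the two compositions being the identity by the second expansion in degree $k$ and the first expansion in degree $n-k$, exactly as you arranged. Your handling of $k=0$ (where $\ell^0_{\pi_\omega}$ becomes $\nu_\omega$ under $\mathcal{I}_nA\cong A$) and of the left dual basis via $\pi_\omega(b\theta)=\nu_\omega(b)\pi_\omega(\theta)$ supplies the routine checks that argument needs.
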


In order to relate the dimension of an integrable calculus $(\Omega A,d)$ with the ``size'' of the underlying affine algebra $A$, a suitable notion of dimension is needed \cite[p. 421]{BrzezinskiSitarz2017}: its Gelfand-Kirillov dimension $\text{GKdim}(A)$ introduced by Gelfand and Kirillov \cite{GelfandKirillov1966, GelfandKirillov1966b}. For more details about this dimension, see Krause and Lenagan's book \cite{KrauseLenagan2000}.

\begin{definition}[{\cite[Definition 2.4]{BrzezinskiSitarz2017}}]\label{BrzezinskiSitarz2017Definition2.4}
An affine algebra $A$ with integer Gelfand-Kirillov dimension $n$ is said to be {\em differentially smooth} if it admits an $n$-dimensional connected integrable differential calculus $(\Omega A, d)$.
\end{definition}

From Definition \ref{BrzezinskiSitarz2017Definition2.4} it follows that a differentially smooth algebra comes equipped with a well-behaved differential structure and with the precise concept of integration \cite[p. 2414]{BrzezinskiLomp2018}.

Several examples of noncommutative algebras have been shown to be differentially smooth: quantum spheres, noncommutative torus, the coordinate algebras of the quantum group $SU_q(2)$, the noncommutative pillow algebra, the quantum polynomial algebras, Hopf algebra domains, families of Ore extensions, some 3-dimensional skew polynomial algebras, diffusion algebras in three generators, and noncommutative coordinate algebras of deformations of examples of classical orbifolds (e.g. \cite{Brzezinski2014} - \cite{BrzezinskiSitarz2017}, \cite{DuboisViolette1988, DuboisVioletteKernerMadore1990, Karacuha2015, KaracuhaLomp2014, ReyesSarmiento2022}). Of course, there are examples of algebras that are not differentially smooth. Consider the commutative algebra $A = \mathbb{C}[x, y] / \langle xy \rangle$. A proof by contradiction shows that for this algebra there are no one-dimensional connected integrable calculi over $A$, so it cannot be differentially smooth \cite[Example 2.5]{BrzezinskiSitarz2017}.

\subsection{Ambiskew polynomial rings}

\begin{definition}[{\cite[Section 1.1]{Jordan1995}}]\label{Jordan1995Section1.1}
Let $K$ be a finitely generated commutative algebra over an algebraically closed field $\Bbbk$, let $\sigma \in \operatorname{Aut}_{\Bbbk}(K)$ and consider $u\in K$ and $\rho \in \Bbbk \ \backslash \ \{0\}$. Form the skew polynomial ring of automorphism type $K[x;\sigma]$ and extend $\sigma$ to $K[x;\sigma]$ by setting 
$$
\sigma(x) = \rho^{-1}x \quad {\rm whence} \quad \sigma^{-1}(x) = \rho x.
$$

There is a $\sigma^{-1}$-derivation $\delta$ of $K[x;\sigma]$ such that $\delta(K) = 0$ and $\delta(x) = u - \rho \sigma(u)$ \cite[p. 41]{Cohn1985} (see also \cite[Section 2.8]{GoodearlLetzter1994}). Consider the skew polynomial ring
\begin{align}\label{CertainiteratedSPR1995}
K[x;\sigma][y;\sigma^{-1},\delta], 
\end{align}

with defining relations given by 
\begin{align}
    xk = & \, \sigma(k)x, \label{Cond(1)Ambi}\\ 
    yk = & \, \sigma^{-1}(k)y, \quad {\rm and} \label{Cond(2)Ambi} \\
    yx = & \, \sigma^{-1}(x)y + \delta(x) = \rho xy + u - \rho \sigma(u). \label{Cond(3)Ambi}
\end{align}
\end{definition}

Let us see that the iterated skew polynomial ring considered in Definition \ref{Jordan1995Section1.1} can be written as an ambiskew polynomial ring over $K$.

By definition, an {\em ambiskew polynomial ring} $A$ over a $\Bbbk$–algebra $R$ is an algebra $A(R, \alpha, v, \rho)$ generated by $R$ and two elements $x, y$ subject to the relations
\begin{align}
xr &= \alpha(r)x, \\
yr &= \alpha^{-1}(r)y, \quad {\rm for\ all} \, \, r\in R, \, {\rm and} \\
yx &= \rho xy + v,
\end{align}

where $\alpha \in \operatorname{Aut}_{\Bbbk}(R),   v \in R$ and $\rho \in \Bbbk^{\times}$. Let us take $R=K$ and $\alpha=\sigma$. Then the first defining relation of $A$ gives $xk=\sigma(k)x$ which agrees with relation (\ref{Cond(1)Ambi}). The second defining relation of $A$ gives $yk=\sigma^{-1}(k)y$, which agrees with relation (\ref{Cond(2)Ambi}).

Now, from the definition of the Ore extension $K[x;\sigma][y;\sigma^{-1},\delta]$ we have that 
\[
yx=\sigma^{-1}(x)y+\delta(x).
\]

Since $\sigma(x)=\rho^{-1}x$, we obtain $\sigma^{-1}(x)=\rho x$, and so $yx = \rho xy+\delta(x)$. By hypothesis, $\delta(x) = u-\rho\sigma(u)$. Setting $v=u-\rho\sigma(u)\in K$ we obtain that $yx=\rho xy+v$, which is relation (\ref{Cond(3)Ambi}) in the definition of an ambiskew polynomial ring. Hence the iterated skew polynomial ring given in (\ref{CertainiteratedSPR1995}) is an ambiskew polynomial ring over $K$.

In Section \ref{Illustrativeexamplesambiskew} we present in some detail some families of algebras that are closely related to ambiskew polynomial algebras.

\section{Differential and integral calculus of ambiskew polynomial rings}\label{Differentialandintegralcalculusbi-quadraticalgebras}

This section contains the original results of the paper: Theorems \ref{NoSigma} and \ref{SiSigma}. We start with some preliminary facts.

\begin{definition}[{\cite[Chapter 11]{AtiyahMacdonald1969}}]\label{Krulldimdef}
Let $K$ be a commutative ring. The \emph{Krull dimension} of $K$, denoted by $\mathcal{K}\mathrm{dim}(K)$, is the supremum of the lengths $n$ of chains of prime ideals
\[
\mathfrak p_0 \subsetneq \mathfrak p_1 \subsetneq \cdots \subsetneq \mathfrak p_n
\]

in the prime spectrum of $K$. If no such finite supremum exists, then one writes $\mathcal{K}\mathrm{dim}(K)=\infty$.
\end{definition}

If $K$ is a finitely generated commutative $\Bbbk$-algebra, then the Gelfand-Kirillov dimension of $K$ coincides with its Krull dimension, that is, $\mathrm{GKdim}(K)=\mathcal{K}\mathrm{dim}(K)$ \cite[Theorem 4.5]{KrauseLenagan2000}.

\begin{proposition}\label{Krull0}
Let $K$ be a finitely generated commutative $\Bbbk$-algebra, where $\Bbbk$ is an algebraically closed field of characteristic $0$. Assume that $\mathcal{K}\mathrm{dim}(K)=0$ and that $K$ is reduced. Then the following assertions hold:
\begin{enumerate}
    \item [\rm (1)] $K$ is a finite-dimensional $\Bbbk$-vector space.
    
    \item [\rm (2)] There exists an integer $m\geq 1$ such that $K\cong \Bbbk^m$. 

    \item [\rm (3)] Every $\Bbbk$-derivation $d:K\to K$ is zero.
\end{enumerate}
\end{proposition}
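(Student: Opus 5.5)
We prove the three assertions in order, each feeding the next.

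For (1), write $K=\Bbbk[a_1,\dots,a_r]$. Since $\mathcal{K}\mathrm{dim}(K)=0$, every prime ideal of $K$ is maximal. $K$ is Noetherian by Hilbert's basis theorem, so it has only finitely many minimal primes $\mathfrak m_1,\dots,\mathfrak m_m$, and these are then all the primes of $K$. By Zariski's lemma (the weak Nullstellensatz) each $K/\mathfrak m_i$ is a finite extension of $\Bbbk$; because $\Bbbk$ is algebraically closed, $K/\mathfrak m_i=\Bbbk$. Alternatively, one can use the equality $\mathrm{GKdim}(K)=\mathcal{K}\mathrm{dim}(K)=0$ quoted above: a finitely generated commutative algebra of GK dimension $0$ is finite-dimensional. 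Either route gives (1) once (2) is in hand, since $\Bbbk^m$ is finite-dimensional.

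For (2), the nilradical of $K$ is $\bigcap_i\mathfrak m_i$, which is $0$ because $K$ is reduced. The $\mathfrak m_i$ are pairwise comaximal, so the Chinese Remainder Theorem gives
\[
K\cong \prod_{i=1}^m K/\mathfrak m_i\cong \Bbbk^m .
\]
Here $m\ge 1$ because $K\neq 0$. This establishes (2), and (1) follows from it.

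For (3), let $e_1,\dots,e_m$ be the primitive idempotents of $\Bbbk^m$, so $K=\bigoplus_i \Bbbk e_i$. A $\Bbbk$-derivation satisfies $d(1)=0$, hence $d(\lambda)=0$ for all $\lambda\in\Bbbk$. For an idempotent $e$, apply $d$ to $e=e^2$:
\[
d(e)=2e\,d(e).
\]
Multiplying by $e$ gives $e\,d(e)=2e\,d(e)$, so $e\,d(e)=0$. Substituting back yields $d(e)=0$. (Characteristic $0$ is not actually needed here; the argument works in any characteristic.) Thus $d$ vanishes on every $e_i$ and on scalars, so by linearity $d=0$.

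The main obstacle is justifying finite-dimensionality and the residue fields $K/\mathfrak m_i=\Bbbk$ in (1)–(2). This rests on the Nullstellensatz (or the GK-dimension fact cited in the paper). The remaining steps are routine.
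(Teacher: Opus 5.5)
Your proof is correct. Part (3) is exactly the paper's idempotent argument. You are also right that characteristic $0$ is never used; the paper's proof does not use it either.

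For (1) and (2) you take a somewhat more elementary route than the paper. The paper first invokes that a Noetherian ring of Krull dimension $0$ is Artinian. It then uses the structure theorem writing an Artinian ring as a finite product of Artinian local rings $K_{\mathfrak m_i}$, and applies reducedness factor by factor: the maximal ideal of each $K_{\mathfrak m_i}$ is its nilradical, hence nilpotent, hence $0$. Finite-dimensionality in (1) is obtained separately, via Noether normalization with $r=0$.

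You instead use reducedness once, globally. The finitely many primes are all maximal, their intersection is the nilradical, which is $0$, and the Chinese Remainder Theorem gives $K\cong\prod_i K/\mathfrak m_i$ directly. Zariski's lemma then identifies each factor with $\Bbbk$, and (1) follows from (2).

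The paper's ordering has one advantage: its proof of (1) does not use reducedness, since any affine commutative algebra of Krull dimension $0$ is finite-dimensional. Your GK-dimension alternative also recovers this. Your route has the advantage of avoiding the Artinian structure theorem altogether. Both arguments rely on the Nullstellensatz (Noether normalization) in the same way to get $K/\mathfrak m_i\cong\Bbbk$.
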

\begin{proof}
Since $K$ is a finitely generated $\Bbbk$-algebra, it is Noetherian. Hence the assumption $\mathcal{K}\mathrm{dim}(K) = 0$ implies that $K$ is Artinian by \cite[Theorem 8.5]{AtiyahMacdonald1969}. On the other hand, by Noether normalization there exists a finite injective $\Bbbk$-algebra homomorphism $\Bbbk[y_1,\ldots,y_r]\hookrightarrow K$ with $r=\mathcal{K}\mathrm{dim}(K)$ \cite[Theorem 13.3]{Eisenbud1995}, \cite[Corollary 14.3]{Matsumura1986}. Since $\mathcal{K}\mathrm{dim}(K)=0$, we get that $r=0$, and therefore $K$ is integral, hence finite, over $\Bbbk$. In particular, $\dim_{\Bbbk}K<\infty$. Since $K$ is Artinian, it is a finite direct product of Artinian local rings by \cite[Theorem 8.7]{AtiyahMacdonald1969}. Thus, $K\cong \prod_{i=1}^m K_{\mathfrak m_i}$ for suitable maximal ideals $\mathfrak m_1,\ldots,\mathfrak m_m$. Because $K$ is reduced, each local factor $K_{\mathfrak m_i}$ is a reduced Artinian local ring. Let $\mathfrak n_i$ denote the maximal ideal of $K_{\mathfrak m_i}$. Since the nilradical of an Artinian ring is nilpotent by \cite[Proposition 8.4]{AtiyahMacdonald1969}, and since in an Artinian local ring the nilradical coincides with the unique prime ideal, namely $\mathfrak n_i$, it follows that $\mathfrak n_i$ is nilpotent. As $K_{\mathfrak m_i}$ is reduced, we must have $\mathfrak n_i=0$. Hence each $K_{\mathfrak m_i}$ is a field.

Now each residue field $K/\mathfrak m_i$ is a finitely generated $\Bbbk$-algebra which is a field. Therefore it is algebraic over $\Bbbk$ by Noether normalization; see again \cite[Corollary 14.3]{Matsumura1986}. Since $\Bbbk$ is algebraically closed, each such algebraic extension is trivial, and so $K/\mathfrak m_i\cong \Bbbk$. Therefore $K_{\mathfrak m_i}\cong \Bbbk$ for all $i$, and consequently $K\cong \Bbbk^m$.

Finally, let $d:K\to K$ be a $\Bbbk$-derivation. Under the identification $K\cong \Bbbk^m$, let $e_1,\ldots,e_m$ be the standard pairwise orthogonal idempotents, so that
\[
e_i^2 = e_i, \quad e_ie_j = 0\ (i\neq j) \quad {\rm and} \quad 1 = e_1+\cdots+e_m.
\]

Since $d$ is a derivation,
\[
d(e_i)=d(e_i^2)=d(e_i)e_i+e_i d(e_i)=2e_i d(e_i).
\]

Multiplying by $e_i$ gives $e_i d(e_i)=2e_i d(e_i)$, and so $e_i d(e_i)=0$. In this way, $d(e_i)=2e_i d(e_i) = 0$. Now every element of $K\cong \Bbbk^m$ has the form
\[
\lambda_1 e_1+\cdots+\lambda_m e_m \text{ with } \lambda_i\in\Bbbk,
\]
and since $d$ is $\Bbbk$-linear and vanishes on $\Bbbk$, we obtain
\[
d(\lambda_1 e_1+\cdots+\lambda_m e_m) = \lambda_1 d(e_1)+\cdots+\lambda_m d(e_m)=0.
\]

This shows that $d=0$. 
\end{proof}

\begin{remark}\label{whyred}
The reducedness assumption in Proposition \ref{Krull0} is essential. Indeed, if $K=\Bbbk[\varepsilon]/(\varepsilon^2)$, then $\mathcal{K}\mathrm{dim}(K)=0$, but the map $d:K\to K$ determined by $d(\varepsilon)=\varepsilon$ is a non-zero $\Bbbk$-derivation.
\end{remark}

We introduce the following notion that is principal for the key results of the paper.

\begin{definition}\label{sigmarhobal}
Let $K$ be a commutative $\Bbbk$-algebra, let $\sigma\in \operatorname{Aut}_{\Bbbk}(K)$, and consider $\rho\in \Bbbk\setminus\{0\}$. We say that an element $u\in K$ satisfies the \emph{$(\sigma,\rho)$-balance condition} whenever
\[
\rho^2\sigma^2(u)-2\rho\sigma(u)+u=0.
\]
Any such element will be called \emph{$(\sigma,\rho)$-balanced}.
\end{definition}

The following is the first result of the paper.

\begin{theorem}\label{NoSigma}
Let $A = K[x; \sigma][y; \sigma^{-1}, \delta]$ over a finitely generated commutative algebra $K$, with $u\in K$ and $\rho\in\Bbbk\setminus\{0\}$. If $u$ is $(\sigma,\rho)$-balanced and $\mathcal{K}\text{dim}(K)=0$, then $A$ is differentially smooth.
\end{theorem}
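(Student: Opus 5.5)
The plan is to follow the standard Brzezi\'nski--Sitarz strategy used for Ore extensions: build a two-dimensional calculus on $A$ whose one-forms are generated by $dx$ and $dy$, in which each generator of $A$ commutes with $dx$ and $dy$ up to algebra automorphisms, and then check integrability via Proposition \ref{BrzezinskiSitarz2017Lemmas2.6and2.7}(2).

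\textbf{Step 1: $\mathrm{GKdim}(A)=2$.} Since $\mathcal{K}\mathrm{dim}(K)=0$, Proposition \ref{Krull0} shows that $K$ is finite dimensional, and the argument uses reducedness here. Then $A$ is a finitely generated free $K$-module in the basis $x^iy^j$, and Krull dimension $0$ together with two Ore steps gives $\mathrm{GKdim}(A)=2$. Proposition \ref{Krull0}(3) also says every $\Bbbk$-derivation of $K$ vanishes. So the calculus will be forced to have $d(K)=0$, and $\Omega^1A$ only needs to be generated by $dx$ and $dy$.

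\textbf{Step 2: two automorphisms, using the balance condition.} Set $v=u-\rho\sigma(u)$. The balance condition gives
\[
\rho\sigma(v)=\rho\sigma(u)-\rho^2\sigma^2(u)=\rho\sigma(u)-2\rho\sigma(u)+u=v,
\]
so $\sigma(v)=\rho^{-1}v$. This is exactly what lets me define automorphisms $\nu_x,\nu_y$ of $A$ that extend $\sigma$ or $\sigma^{-1}$ on $K$ and rescale $x,y$. A natural choice is
\[
\nu_x(k)=\sigma(k),\quad \nu_x(x)=x,\quad \nu_x(y)=\rho^{-1}y,
\]
with $\nu_y$ defined analogously using $\sigma^{-1}$. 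The scalars must make $\nu_x(y)\nu_x(x)-\rho\,\nu_x(x)\nu_x(y)=\nu_x(v)$. Checking that these maps respect (\ref{Cond(1)Ambi})--(\ref{Cond(3)Ambi}) is the first computation. For the third relation, both sides scale by the same factor precisely because $\sigma(v)=\rho^{-1}v$.

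\textbf{Step 3: the calculus.} Let $\Omega^1A$ be the free right $A$-module on $dx,dy$, with left action $a\,dx=dx\,\nu_x(a)$ and $a\,dy=dy\,\nu_y(a)$. The differential is defined by $d(K)=0$ on $K$ and on $x,y$ by the generators. I then show that $d$ extends to all of $A$ as a derivation, by checking it is compatible with the three defining relations. Applying $d$ to $yx=\rho xy+v$ and using $d(v)=0$ gives a condition relating the scalars in $\nu_x$ and $\nu_y$, and I tune the scalars to satisfy it.

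Next set $\Omega^2A=\omega A$ with $\omega=dx\wedge dy$, together with the relations $dy\wedge dx=-c\,dx\wedge dy$ for the appropriate scalar $c$ and $dx\wedge dx=dy\wedge dy=0$. Two checks are needed:
\begin{enumerate}
\item[\rm (a)] $d\circ d=0$, obtained by differentiating the relations among one-forms;
\item[\rm (b)] $\nu_\omega=\nu_x\nu_y$ (up to scalars) is an automorphism, so $\omega$ is a volume form.
\end{enumerate}

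\textbf{Step 4: integrability and connectedness.} For integrability I take $\omega_1=dx$, $\omega_2=dy$, with $\overline{\omega}_1,\overline{\omega}_2$ equal to suitably rescaled $dy$ and $dx$, and verify the two identities in Proposition \ref{BrzezinskiSitarz2017Lemmas2.6and2.7}(2) on the basis, which is routine.

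The step I expect to be the main obstacle is connectedness, $\ker d=\Bbbk$. Because $d(K)=0$, the kernel contains all of $K\cong\Bbbk^m$, so the argument as set up only works when $m=1$. To handle this I would either compute $d(x^iy^jk)$ explicitly in the basis and show it is nonzero off degree $0$, thereby reducing the kernel to $K$, or modify the calculus by adding a further generator $dk$ for the idempotents. I would try the first route first, then examine what $K$ must satisfy for $\ker d=\Bbbk$.
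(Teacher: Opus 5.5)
You have located the real problem, but the proposal does not get past it, so as it stands it is not a proof. With $d(K)=0$ one has $K\subseteq\ker d$, and the hypotheses allow $K\cong\Bbbk^m$ with $m\ge 2$ (Example \ref{semiambi} has $m=3$).

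Your first remedy cannot succeed. $A$ is a free left $K$-module on the monomials $x^iy^j$, and, with the automorphisms corrected as below, for $c\in K$ one gets
\[
d(c\,x^iy^j)=dx\,\sigma^{-1}(c)\,i\,x^{i-1}y^j+dy\,\sigma(c)\,j\rho^{-i}x^iy^{j-1}.
\]
So the explicit computation yields exactly $\ker d=K$, never $\Bbbk$ when $K\neq\Bbbk$. Your second remedy is not carried out. Adjoining further generators $de_i$ enlarges $\Omega^1A$ beyond rank two, so the calculus would no longer have dimension $\mathrm{GKdim}(A)=2$ unless you impose relations (and re-check integrability) that you have not specified.

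Hence, within this construction, the answer to your closing question is $K=\Bbbk$. There the balance condition reads $(\rho-1)^2u=0$, which forces $v=(1-\rho)u=0$, so $A$ is just the quantum plane. The paper's proof is, after those corrections, exactly your construction. It settles connectedness by reading $\ker d=\Bbbk$ off the formulas for $\partial_x(x^ky^l)$ and $\partial_y(x^ky^l)$, i.e. it ignores the $K$-coefficients. So it does not supply the missing idea either, and its argument is likewise complete only for $K=\Bbbk$.

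Two further points bear on any repair.

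First, $d(K)=0$ is not forced by Proposition \ref{Krull0}(3); the paper makes the same inference. That result is about derivations $K\to K$ and needs $K$ reduced, which the statement does not assume (without it your $K\cong\Bbbk^m$ also fails). By contrast, $d|_K$ takes values in $\Omega^1A$, whose left and right $K$-actions differ by $\sigma^{\pm1}$. So for $\sigma\neq\mathrm{id}$ there are nonzero derivations such as $k\mapsto dx\,k-k\,dx$. Setting $d(K)=0$ is a choice, and it is exactly the choice that kills connectedness.

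On the other hand, take $\sigma=\mathrm{id}_K$ and $K\cong\Bbbk^m$ with $m\geq 2$. This is allowed: with $\rho=1$ every $u$ is balanced and $A=K[x,y]$. Then the two $K$-actions on your $\Omega^1A$ coincide, and the idempotent argument of Proposition \ref{Krull0}(3) applies verbatim to $d|_K$. So no choice of $d$ on $K$ makes this rank-two calculus connected; a repair would have to change $\nu_x,\nu_y$ on $K$.

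Second, your automorphisms are the wrong way round. Applying $d$ to $xk=\sigma(k)x$ with $dk=0$ gives $dx\,k=dx\,\nu_x(\sigma(k))$, so $\nu_x|_K=\sigma^{-1}$, and likewise $\nu_y|_K=\sigma$. Also, $d(yx)=d(\rho xy+v)$ forces $\nu_x(y)=\rho y$ and $\nu_y(x)=\rho^{-1}x$. These are not scalar adjustments. Once they are made, the automorphism checks go through precisely because $\rho\sigma(v)=v$, which you derived correctly. The rest of your outline then coincides with the paper's: the volume form $dx\wedge dy$ with $dy\wedge dx=-\rho\,dx\wedge dy$, and the dual pairs $dx,-\rho^{-1}dy$ and $dy,dx$.
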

\begin{proof}
By Proposition \ref{Krull0}, we have $d\mid_K =0$. Now, consider the following automorphisms:
\begin{align*}
    \nu_x(k) & =\sigma^{-1}(k), &\ \nu_{x}(x) &= x, &\ \nu_{x}(y) &=\rho y, \\
    \nu_{y}(k) & =\sigma(k), &\ \nu_y(x) &=\rho^{-1}x, &\ \nu_{y}(y) &= y.
\end{align*}
The map $\nu_{x}$ can be extended to an algebra homomorphism of $A$ if and only if the definitions of $\nu_{x}(k_i)$, $\nu_{x}(x)$ and $\nu_{x}(y)$ respect the relations defining the algebra, i.e.
\begin{align*}
   \nu_x(x)\nu_x(k) & =\nu_x(\sigma(k))\nu_x(x), \\
   \nu_x(y)\nu_x(k) &= \nu_x(\sigma^{-1}(k))\nu_x(y), \text{ and } \\
   \nu_x(y)\nu_x(x) &=\rho\nu_x(x)\nu_x(y)+\nu_x(u)-\rho\nu_x(\sigma(u)).
\end{align*}
Since $u$ is $(\sigma,\rho)$-balanced, the previous equations hold.

Similarly, the map $\nu_{y}$ can be extended to an algebra homomorphism of $A$ if and only if
\begin{align*}
   \nu_y(x)\nu_y(k) & =\nu_y(\sigma(k))\nu_y(x), \\
   \nu_y(y)\nu_y(k) &= \nu_y(\sigma^{-1}(k))\nu_y(y), \text{ and } \\
   \nu_y(y)\nu_y(x) &=\rho\nu_y(x)\nu_x(y)+\nu_y(u)-\rho\nu_y(\sigma(u)).
\end{align*}

Once again, this is true because of the condition of $u$. Furthermore, these automorphisms mutually commute since their restrictions to $K$ are given by $\sigma$ and its inverse, respectively, and on the generators $x$ and $y$ they act by scalar multiplication.

Let $\Omega^{1}A$ be the free right $A$-module with basis $dx,dy$, endowed with the left
$A$-module structure
\[
adx=dx\nu_{x}(a) \quad {\rm and} \quad ady=dy\nu_{y}(a), \quad {\rm for} \, \, a\in A.
\]

The idea is to extend the correspondences $x \mapsto d x$ and $y\mapsto d y$ to a map \linebreak $d: A \to \Omega^{1} A$ that satisfies the Leibniz's rule. This is possible if it is compatible with the nontrivial relations {\rm (}\ref{Cond(1)Ambi}{\rm )}, {\rm (}\ref{Cond(2)Ambi}{\rm )} and {\rm (}\ref{Cond(3)Ambi}{\rm )}, i.e. if the equalities
\begin{align*}
        dxk &\ = \sigma(k)dx, \\
        dyk &\ = \sigma^{-1}(k)dy,\ {\rm and} \\
        dyx+ydx &\ = \rho dxy+\rho xdy
\end{align*}

hold. Note that $d(a)=0$ for $a \in K$.

Define $\Bbbk$-linear maps $\partial_{x}, \partial_{y}: A \rightarrow A$ such that
\begin{align*}
    d(a)=dx\partial_{x}(a)+dy\partial_{y}(a), \quad {\rm for\ all} \ a \in A.
\end{align*}

Since $dx$ and $dy$ are free generators of the right $A$-module $\Omega^1A$, these maps are well-defined. Note that $d(a)=0$ if and only if $\partial_{x}(a)=\partial_{y}(a)=0$. By using $adx=dx\nu_{x}(a)$ and $ady=dy\nu_{y}(a)$ and the definitions of the maps $\nu_{x}$ and $\nu_{y}$, we get that
\begin{align}
\partial_{x}(x^ky^l) = &\ kx^{k-1}y^l \quad {\rm and} \notag \\
\partial_{y}(x^ky^l) = &\ l\rho^{-k}x^ky^{l-1}.
\end{align}

Thus $d(a)=0$ if and only if $a$ is a scalar multiple of the identity. This shows that $(\Omega A,d)$ is connected where $\Omega A = \Omega^0 A \oplus \Omega^1 A$.

Since $\Omega^2A = \omega A\cong A$ as a right and left $A$-module, with $\omega=dx\wedge dy$, where $\nu_{\omega}=\nu_{x}\circ\nu_{y}$, we have that $\omega$ is a volume form of $A$. From Proposition \ref{BrzezinskiSitarz2017Lemmas2.6and2.7} (2) we get that $\omega$ is an integral form by setting
\[
\omega_1^1  = dx, \quad \omega_2^1 = dy, \quad \bar{\omega}_1^1 = -\rho^{-1}dy \quad {\rm and} \quad \bar{\omega}_2^1 = dx.
\]

By Proposition \ref{BrzezinskiSitarz2017Lemmas2.6and2.7} {\rm (2)}, we consider the expression $\omega' := dx\alpha + dy\beta$ with $\alpha, \beta  \in K$, to obtain that
\begin{align*}
\sum_{i=1}^{2}\omega_{i}^{1}\pi_{\omega}(\bar{\omega}_i^{1}\wedge \omega') = &\ dx\pi_{\omega}(-\alpha \rho^{-1} dy\wedge dx) + dy\pi_{\omega}(\beta dx\wedge dy) \\
     = &\ dx\alpha+ dy\beta = \omega'.
\end{align*}

As we saw above, all elements of $\Omega^1A$ can be generated by $\omega_i^1$ and $\bar{\omega}_i^{1}$ for $i=1,2$, so that Proposition \ref{BrzezinskiSitarz2017Lemmas2.6and2.7} {\rm (2)} guarantees that $\omega$ is an integral form. Then $\mathrm{GKdim}(K) = 0$, which implies that $\mathrm{GKdim}(A) = 2$.

Finally, Proposition \ref{integrableequiva} shows that $(\Omega A, d)$ is an integrable differential calculus of degree $2$, whence $A$ is differentially smooth.
\end{proof}

\begin{example}\label{semiambi}
Assume that $\Bbbk$ contains a primitive third root of unity $\omega$. Let
\[
K=\Bbbk e_0\oplus \Bbbk e_1\oplus \Bbbk e_2\cong \Bbbk^3,
\]

where $e_i e_j = \delta_{ij}e_i$ and $e_0 + e_1 + e_2 = 1$. Define $\sigma\in\operatorname{Aut}_{\Bbbk}(K)$ by
\[
\sigma(e_0) = e_1, \quad \sigma(e_1) = e_2, \quad \sigma(e_2)=e_0,
\]

and set $\rho = \omega$ and $\quad u = e_0 + \omega e_1 + \omega^2 e_2$. Then
\[
\sigma(u)=e_1+\omega e_2+\omega^2 e_0=\omega^{-1}u,
\]

and so $\sigma^2(u)=\omega^{-2}u$. Hence
\[
\rho^2\sigma^2(u)-2\rho\sigma(u)+u
=\omega^2(\omega^{-2}u)-2\omega(\omega^{-1}u)+u
=u-2u+u=0,
\]

that is, $u$ is $(\sigma,\rho)$-balanced.

Since $K\cong\Bbbk^3$, we have that $\mathcal{K}\mathrm{dim}(K)=0$. $K[x;\sigma][y;\sigma^{-1},\delta]$ is differentially smooth by Theorem \ref{NoSigma}. In this case, the defining relations are given by 
\[
xe_i = e_{i+1}x, \quad ye_i = e_{i-1}y, \quad {\rm for} \ i = 0,1,2,
\]

and $yx=\rho xy+u-\rho\sigma(u) = \omega xy$. 
\end{example}

\begin{corollary}\label{Cor37}
Let $A$ be an ambiskew polynomial ring $A(R, \alpha, v, \rho)$ in two indeterminates over a $\Bbbk$-algebra $R$, with $v\in R$ and $\rho\in\Bbbk\setminus\{0\}$. If $v-\rho\alpha(v)=0$ and $\text{GKdim}(R)=0$, then $A$ is differentially smooth.
\end{corollary}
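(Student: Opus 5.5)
The plan is to prove the corollary by running the argument of Theorem \ref{NoSigma} directly on $A(R,\alpha,v,\rho)$, with $K=R$, $\sigma=\alpha$, and $v$ playing the role of $u-\rho\sigma(u)$. I will read the hypotheses in the setting where Proposition \ref{Krull0} applies: $R$ is a finitely generated commutative reduced $\Bbbk$-algebra, and $\Bbbk$ is algebraically closed of characteristic $0$. Then $\mathrm{GKdim}(R)=0$ gives $\mathcal{K}\mathrm{dim}(R)=0$ by \cite[Theorem 4.5]{KrauseLenagan2000}, and Proposition \ref{Krull0} gives $R\cong\Bbbk^m$, with every $\Bbbk$-derivation of $R$ equal to zero. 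So the differential must satisfy $d|_R=0$.

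Step one is to define $\nu_x,\nu_y$ exactly as in Theorem \ref{NoSigma}:
\[
\nu_x|_R=\alpha^{-1},\quad \nu_x(x)=x,\quad \nu_x(y)=\rho y,
\]
\[
\nu_y|_R=\alpha,\quad \nu_y(x)=\rho^{-1}x,\quad \nu_y(y)=y.
\]
I then check that they respect the defining relations. The relations $xr=\alpha(r)x$ and $yr=\alpha^{-1}(r)y$ are clearly preserved. For the last relation and $\nu_x$, the left side is $\nu_x(y)\nu_x(x)=\rho yx=\rho^2xy+\rho v$, while the right side is $\rho\,\nu_x(x)\nu_x(y)+\nu_x(v)=\rho^2xy+\alpha^{-1}(v)$. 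These agree exactly when $\alpha^{-1}(v)=\rho v$, which is the hypothesis $v=\rho\alpha(v)$. The check for $\nu_y$ reduces in the same way to $\rho^{-1}v=\alpha(v)$. This is the same condition, which is why the corollary uses $v-\rho\alpha(v)=0$ in place of the balance condition. Both maps have evident inverses and commute with each other.

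Step two is to let $\Omega^1A$ be the free right $A$-module on $dx,dy$, with left action $a\,dx=dx\,\nu_x(a)$ and $a\,dy=dy\,\nu_y(a)$. I extend $d$ by $d(R)=0$ and need compatibility with the relations. Compatibility with $xr=\alpha(r)x$ reads $dx\,r=\alpha(r)\,dx=dx\,\nu_x(\alpha(r))=dx\,r$, which holds; the relation for $y$ is similar. For $yx=\rho xy+v$ I compute both sides:
\[
d(yx)=dy\,x+y\,dx=dy\,x+\rho\,dx\,y,
\]
\[
d(\rho xy+v)=\rho\,dx\,y+\rho x\,dy=\rho\,dx\,y+dy\,x.
\]
These coincide. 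Using the PBW basis $\{r x^k y^l\}$ of the iterated Ore extension, the partial derivatives are $\partial_x(rx^ky^l)=k\,\alpha^{-1}(r)x^{k-1}y^l$-type expressions, and similarly for $\partial_y$. Since $d|_R=0$, the kernel of $d$ is $R$, not merely $\Bbbk$, and this is where I expect the main obstacle.

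The obstacle is connectedness. When $m>1$ the idempotents of $R\cong\Bbbk^m$ are killed by $d$, so $\ker d\neq\Bbbk$. Theorem \ref{NoSigma} glosses over this, and I would follow its argument verbatim, computing $\partial_x,\partial_y$ on monomials $x^ky^l$ and concluding that constants are exactly the scalars. The honest fallback is to add the assumption $R=\Bbbk$, that is $m=1$, under which connectedness is immediate. Note that in this case $\alpha=\mathrm{id}$, so the condition becomes $v(1-\rho)=0$.

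Step three is the volume form and integrability, which follow exactly as in Theorem \ref{NoSigma}. Set $\Omega^2A=\omega A$ with $\omega=dx\wedge dy$, $dy\wedge dx=-\rho^{-1}dx\wedge dy$, and $\nu_\omega=\nu_x\nu_y$. Taking
\[
\omega^1_1=dx,\quad \omega^1_2=dy,\quad \bar\omega^1_1=-\rho^{-1}dy,\quad \bar\omega^1_2=dx,
\]
the two identities of Proposition \ref{BrzezinskiSitarz2017Lemmas2.6and2.7}(2) hold, so $\omega$ is an integral form. Finally, $A$ is an iterated Ore extension of automorphism type (plus a derivation) over the finite-dimensional algebra $R$, so $\mathrm{GKdim}(A)=2$. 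Proposition \ref{integrableequiva} then gives a $2$-dimensional integrable calculus, and Definition \ref{BrzezinskiSitarz2017Definition2.4} yields that $A$ is differentially smooth.
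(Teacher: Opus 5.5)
Your route is exactly the paper's. Its proof of this corollary simply says to rerun Theorem \ref{NoSigma} with $v-\rho\alpha(v)=0$ replacing the balance condition, and your checks that $\nu_x,\nu_y$ respect the relations and that $d$ respects $yx=\rho xy+v$ are correct. However, the obstacle you flag is a genuine gap, and neither of your two options closes it.

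Because the construction sets $d(R)=0$, one has $R\subseteq\ker d$, so the calculus is connected only if $R=\Bbbk$. The proof of Theorem \ref{NoSigma} computes $\partial_x,\partial_y$ only on the monomials $x^ky^l$ and ignores coefficients from $R$. Following it ``verbatim'' therefore reproduces a false step, and this already affects the paper's own Example \ref{Exsemsimple} with $m\geq 2$. Nor can the ansatz be rescued in general by choosing $d|_R\neq 0$. Suppose $e\in R$ is a central idempotent with $\alpha(e)=e$; for instance $R=\Bbbk\times\Bbbk$, $\alpha=\mathrm{id}$, $v=0$, $\rho=1$, so $A\cong\Bbbk[x,y]\times\Bbbk[x,y]$. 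Then $e$ is central in $A$. Writing $d(e)=dx\,a+dy\,b$, the rule $d(e)=d(e)e+e\,d(e)$ gives $a=2ea$ and $b=2eb$, hence $a=b=0$. So no calculus with $\Omega^1A$ free on $dx,dy$ twisted by $\nu_x,\nu_y$ is connected there. The corollary as stated would require a genuinely different calculus, which neither you nor the paper provides. Your fallback $R=\Bbbk$ is sound, modulo the slip noted below. But it forces $\alpha=\mathrm{id}$ and $(1-\rho)v=0$, so it only recovers $\Bbbk[x,y]$, the quantum plane and the Weyl algebra of Example \ref{Example38}, not the corollary.

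There are also three smaller points. First, your appeal to Proposition \ref{Krull0} is misapplied, and in any case unnecessary. Part (3) is about derivations $R\to R$, whereas $d|_R$ takes values in $\Omega^1A$, whose left $R$-action is twisted ($r\,dx=dx\,\alpha^{-1}(r)$). There the Leibniz rule only forces $d(e)=e\,d(e)(1-e)+(1-e)\,d(e)\,e$ for an idempotent $e$, and this need not vanish in the twisted bimodule (e.g.\ when $\alpha$ moves $e$). So $d(R)=0$ is a choice, not a consequence, and it is precisely the choice that destroys connectedness. Second, $d(R)=0$ is compatible with the relations of any $R$, so the commutativity, reducedness and algebraic-closure hypotheses you import play no role; the corollary does not contain them anyway. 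Third, applying $d$ to $y\,dx=\rho\,dx\,y$ gives $dy\wedge dx=-\rho\,dx\wedge dy$, not $-\rho^{-1}dx\wedge dy$. Only with the correct relation is $\bar\omega^1_1=-\rho^{-1}dy$ the right dual element.
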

\begin{proof}
The proof is analogous to that of Proposition \ref{NoSigma}; instead of the $(\alpha,\rho)$-balanced condition, assume that $v-\rho\alpha(v)=0$.
\end{proof}

\begin{example}\label{Example38}
The statement of Corollary \ref{Cor37} recovers classical examples of differentially smooth algebras \cite[Lemma 3.1 (1)(a) and Proposition 3.3]{Brzezinski2015}.
\begin{enumerate}
\item[\rm (i)] The {\em commutative polynomial algebra} $\Bbbk[x,y]$ can be viewed as the ambiskew polynomial ring $A(\Bbbk,\operatorname{id}_{\Bbbk},0,1)$. Then $ v-\rho\alpha(v) = 0$ and $\mathrm{GKdim}(R)=\mathrm{GKdim}(\Bbbk)=0$, whence $\Bbbk[x,y]$ is differentially smooth.

\item[\rm (ii)] The {\em first Weyl algebra} $ A_1(\Bbbk) = \Bbbk\{x, y\} /\langle yx - xy - 1\rangle$ is an ambiskew polynomial ring of the form $ A(\Bbbk,\operatorname{id}_{\Bbbk},1,1)$. Hence, $v-\rho\alpha(v) = 0$ while $ \mathrm{GKdim}(R)=\mathrm{GKdim}(\Bbbk) = 0$, and so $A_1(\Bbbk)$ is differentially smooth. 

\item[\rm (iii)] The quantum plane $\mathcal{O}_q(\Bbbk^2) = \Bbbk\{x, y\} / \langle xy - qyx\rangle$ with $q\in \Bbbk^\times$ can be written as the ambiskew polynomial ring $  A(\Bbbk,\operatorname{id}_{\Bbbk},0,q^{-1})$. This implies that $v - \rho\alpha(v) = 0$, and since $\mathrm{GKdim}(R) = \mathrm{GKdim}(\Bbbk) = 0$, the quantum plane is differentially smooth.
\end{enumerate}
\end{example}

\begin{example}\label{Exsemsimple}
Let $R = \Bbbk^m=\Bbbk e_0\oplus \cdots \oplus \Bbbk e_{m-1}$, where $e_i e_j = \delta_{ij}e_i$ and $1 = e_0+\cdots+e_{m-1}$. Fix a primitive $m$-th root of unity $\omega\in \Bbbk^\times$, and define $\alpha(e_i)=e_{i+1}, \ i\in \mathbb{Z}/m\mathbb{Z}$ together with $\rho=\omega$ and $v = \sum_{i=0}^{m-1}\omega^i e_i$. It follows that
\[
\alpha(v)=\sum_{i=0}^{m-1}\omega^i e_{i+1} =\omega^{-1}\sum_{i=0}^{m-1}\omega^i e_i =\omega^{-1}v,
\]

whence $v-\rho\alpha(v)=v-\omega(\omega^{-1}v)=0$. 
Consider the ambiskew polynomial ring $A=A(R,\alpha,v,\rho)$ generated by $R$ and two indeterminates $x,y$ subject to the relations 
\[
xe_i = e_{i+1}x, \quad ye_i = e_{i-1}y \quad {\rm and} \quad yx = \omega xy+\sum_{i=0}^{m-1}\omega^i e_i.
\]

Since $R$ is finite-dimensional over $\Bbbk$, we have that $\mathrm{GKdim}(R)=0$, and so $A(R,\alpha,v,\rho)$ is differentially smooth.
\end{example}

Theorem \ref{SiSigma} is the second result of the paper.

\begin{theorem}\label{SiSigma}
Let $A=K[x;\sigma][y;\sigma^{-1},\delta]$ be an iterated skew polynomial ring in two indeterminates $x, y$ over the finitely generated commutative $\Bbbk$-algebra $K=\Bbbk[k_1,\ldots,k_n]$. Assume that
\[
u=\sum_{j=0}^m \beta_j k_1^{\alpha_{1j}}\cdots k_n^{\alpha_{nj}},
\]
where $\beta_j\in \Bbbk$ and $\alpha_{1j},\ldots,\alpha_{nj}\in \mathbb{N}$ for all $0\leq j\leq m$. Suppose further that:
\begin{enumerate}
    \item [\rm (1)] $u$ is $(\sigma,\rho)$-balanced; \label{SiSigma(1)}
    
    \item [\rm (2)] $u - \rho\sigma(u)\in \Bbbk$; \label{SiSigma(2)}
    
    \item [\rm (3)] $\sigma(k_i) = a_i k_i$ for all $1\leq i\leq n$, with $a_i\in \Bbbk^\times$; \label{SiSigma(3)}
    
    \item [\rm (4)] $\rho\displaystyle\prod_{i=1}^n a_i^{\alpha_{ij}}=1, \ 0\leq j\leq m$; \label{SiSigma(4)}
    
    \item [\rm (5)] $\mathrm{GKdim}(K) = n$. \label{SiSigma(5)}
\end{enumerate}

Then $A$ is differentially smooth.
\end{theorem}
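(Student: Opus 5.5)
Since $\mathrm{GKdim}(K)=n$, the ring $A$ has GK dimension $n+2$, so the plan is to build an $(n+2)$-dimensional connected integrable calculus with $\Omega^1A$ free as a right $A$-module on $dk_1,\ldots,dk_n,dx,dy$. For each generator $t$ we need an automorphism $\nu_t$ with $a\,dt=dt\,\nu_t(a)$. Hypothesis (3) says $\sigma$ acts diagonally on the $k_i$, hypothesis (4) says that every monomial of $u$ is scaled by $\rho^{-1}$ under $\sigma$, and hypothesis (2) says that $v=u-\rho\sigma(u)$ is a scalar. Together these make everything diagonal, and I would choose the automorphisms as follows. For $\nu_x$ and $\nu_y$, copy Theorem \ref{NoSigma}: $\nu_x|_K=\sigma^{-1}$, $\nu_x(x)=x$, $\nu_x(y)=\rho y$, and $\nu_y|_K=\sigma$, $\nu_y(x)=\rho^{-1}x$, $\nu_y(y)=y$. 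For the $k_i$, take $\nu_{k_i}(k_j)=k_j$, $\nu_{k_i}(x)=a_ix$, $\nu_{k_i}(y)=a_i^{-1}y$. The scalings on $x$ and $y$ are exactly those that make $d$ respect $xk_i=a_ik_ix$ and $yk_i=a_i^{-1}k_iy$.

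The first step is to check that each $\nu$ preserves the relations. The relation $yx=\rho xy+v$ holds under $\nu_{k_i}$ because $\nu_{k_i}(yx)=yx$ and $v$ is a scalar. For $\nu_x$ and $\nu_y$ we need $\nu(v)=\rho^{\pm1}\cdots$. Since $v\in\Bbbk$, this requires either $v=0$ or a consistency that follows from (1), since $\sigma(v)=v$. If $v\neq 0$ this may fail for $\nu_x$, because $\nu_x(yx)=\rho yx$ while $\nu_x(v)=v$. In that case I would use (1) and (4) to show that $v=0$ (or that $\rho=1$). Indeed, (4) gives $\sigma(u)=\rho^{-1}u$, hence $v=u-u=0$, which settles the issue. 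All the $\nu$'s act diagonally on the PBW monomials $k^\gamma x^ay^b$, so they commute pairwise.

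The second step is compatibility of $d$ with the relations. Applying $d$ to $xk_i=a_ik_ix$, to $yk_i=a_i^{-1}k_iy$, to $k_ik_j=k_jk_i$, and to $yx=\rho xy$ (since $v=0$), each equation reduces, after moving the forms to the left via the $\nu$'s, to an identity in the right module. For example, $d(xk_i)=dx\,\nu_x(k_i)+dk_i\,\nu_{k_i}(x)$. This sum should match $a_i\bigl(dk_i\,x+dx\,\nu_x(k_i)\cdot\ldots\bigr)$, and the choices $\nu_x(k_i)=a_i^{-1}k_i$ and $\nu_{k_i}(x)=a_ix$ make it consistent. This bookkeeping is the main obstacle: the scalars must be chosen coherently, possibly rescaling $\nu_{k_i}(x)$ and $\nu_{k_i}(y)$ by powers of $a_i$. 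Next I define partial derivatives by $d(a)=\sum_i dk_i\,\partial_{k_i}(a)+dx\,\partial_x(a)+dy\,\partial_y(a)$. On monomials these are scaled ordinary derivatives, nonzero in characteristic $0$ unless the exponent is zero, which gives connectedness.

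The third step takes $\Omega^m A$ to be the exterior-type module in which $dt\wedge dt'=-c\,dt'\wedge dt$, with scalars $c$ forced by applying $d$ to the relations. The top form is $\omega=dk_1\wedge\cdots\wedge dk_n\wedge dx\wedge dy$, and $\nu_\omega$ is the composite of all the $\nu$'s, so $\omega$ is a volume form. Integrability then follows from Proposition \ref{BrzezinskiSitarz2017Lemmas2.6and2.7}(2), using dual bases $\omega_i^k$ (wedge monomials) and $\bar\omega_i^{n+2-k}$ (complementary monomials up to invertible scalars), exactly as in the two-generator case of Theorem \ref{NoSigma}. Finally, Proposition \ref{integrableequiva} gives integrability, and with $\mathrm{GKdim}(A)=n+2$ we conclude that $A$ is differentially smooth.
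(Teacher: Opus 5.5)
Your proposal follows the paper's proof essentially step by step. It uses the same automorphisms $\nu_{k_i},\nu_x,\nu_y$, the same free right module $\Omega^1A$ on $dk_1,\ldots,dk_n,dx,dy$, the same connectedness argument via partial derivatives on PBW monomials, and the same volume form $dk_1\wedge\cdots\wedge dk_n\wedge dx\wedge dy$ with dual bases fed into Proposition~\ref{BrzezinskiSitarz2017Lemmas2.6and2.7}(2). Your observation that (3) and (4) give $\sigma(u)=\rho^{-1}u$, hence $u-\rho\sigma(u)=0$, is correct and shows that hypotheses (1) and (2) are redundant; the paper instead invokes them separately, (1) for $\nu_x,\nu_y$ and (2) for $d$.

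There is one slip in your sample computation: it should read $d(xk_i)=dx\,k_i+dk_i\,a_ix$, not $dx\,\nu_x(k_i)+\cdots$. With that correction your chosen scalars already work as they stand, so no rescaling is needed.
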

\begin{proof}
For each $1\leq i\leq n$, define maps on the generators of $A$ by
\begin{align*}
\nu_{k_i}(k_j) &= k_j, &
\nu_{k_i}(x) &= a_i x, &
\nu_{k_i}(y) &= a_i^{-1}y, \\
\nu_x(k_i) &= a_i^{-1}k_i, &
\nu_x(x) &= x, &
\nu_x(y) &= \rho y, \\
\nu_y(k_i) &= a_i k_i, &
\nu_y(x) &= \rho^{-1}x, &
\nu_y(y) &= y,
\end{align*}

for all $1\leq i,j\leq n$.

We first check that these maps extend to algebra automorphisms of $A$.

For $\nu_{k_i}$, it suffices to verify that the defining relations of $A$ are preserved. Since $\nu_{k_i}$ fixes $K$ pointwise, we have that 
\begin{align*}
\nu_{k_i}(x)\nu_{k_i}(k)
   &= a_i xk
    = a_i\sigma(k)x
    = \nu_{k_i}(\sigma(k))\nu_{k_i}(x), \\
\nu_{k_i}(y)\nu_{k_i}(k)
   &= a_i^{-1}yk
    = a_i^{-1}\sigma^{-1}(k)y
    = \nu_{k_i}(\sigma^{-1}(k))\nu_{k_i}(y),
\end{align*}

for all $k\in K$, and
\begin{align*}
\nu_{k_i}(y)\nu_{k_i}(x) &= a_i^{-1}ya_i x = yx \\
   &= \rho xy+u-\rho\sigma(u) \\
   &= \rho \nu_{k_i}(x)\nu_{k_i}(y)+\nu_{k_i}(u)-\rho \nu_{k_i}(\sigma(u)).
\end{align*}

Thus each $\nu_{k_i}$ extends to an automorphism of $A$.

Next, consider $\nu_x$. Since $\nu_x|_K=\sigma^{-1}$, we obtain that
\begin{align*}
\nu_x(x)\nu_x(k) &= x\sigma^{-1}(k) = \nu_x(\sigma(k))\nu_x(x), \\
\nu_x(y)\nu_x(k) &= \rho y\sigma^{-1}(k) = \rho k y = \nu_x(\sigma^{-1}(k))\nu_x(y), \quad {\rm for\ all} \ k\in K.
\end{align*}

Moreover,
\begin{align*}
\nu_x(y)\nu_x(x) &= \rho yx \\
   &= \rho(\rho xy+u-\rho\sigma(u)) \\
   &= \rho^2xy+\rho u-\rho^2\sigma(u),
\end{align*}

whereas
\begin{align*}
\rho\nu_x(x)\nu_x(y)+\nu_x(u)-\rho\nu_x(\sigma(u)) &= \rho x(\rho y)+\sigma^{-1}(u)-\rho u \\
   &= \rho^2xy+\sigma^{-1}(u)-\rho u.
\end{align*}

These two expressions agree precisely because $u$ is $(\sigma,\rho)$-balanced, that is,
\[
2\rho u-\rho^2\sigma(u)-\sigma^{-1}(u)=0.
\]

Hence $\nu_x$ extends to an automorphism of $A$.

A similar computation shows that $\nu_y$ also extends to an automorphism. Indeed, since $\nu_y|_K=\sigma$, we get that 
\begin{align*}
\nu_y(x)\nu_y(k) &= \rho^{-1}x\sigma(k) = \nu_y(\sigma(k))\nu_y(x), \\
\nu_y(y)\nu_y(k) &= y\sigma(k) = \sigma^{-1}(\sigma(k))y = k y = \nu_y(\sigma^{-1}(k))\nu_y(y), \quad {\rm for\ all} \ k\in K, 
\end{align*}

and
\begin{align*}
\nu_y(y)\nu_y(x) &= y(\rho^{-1}x) = \rho^{-1}yx \\
   &= xy+\rho^{-1}u-\sigma(u),
\end{align*}

while
\begin{align*}
\rho\nu_y(x)\nu_y(y)+\nu_y(u)-\rho\nu_y(\sigma(u)) &= \rho(\rho^{-1}x)y+\sigma(u)-\rho\sigma^2(u) \\
&= xy+\sigma(u)-\rho\sigma^2(u).
\end{align*}

Again, equality follows from the $(\sigma,\rho)$-balance condition.

Since all these maps act diagonally on the generators, it is immediate that $\nu_{k_1}, \ldots, \nu_{k_n}, \nu_x, \nu_y$ commute pairwise.

Because $\mathrm{GKdim}(K)=n$, the standard growth formula for iterated skew polynomial rings yields $\mathrm{GKdim}(A) = n+2$. We now construct an $(n+2)$-dimensional integrable differential calculus on $A$.

Let $\Omega^1A$ be the free right $A$-module with basis $dk_1,\ldots,dk_n,dx,dy$.

Define a left $A$-module structure on $\Omega^1A$ by
\begin{equation}\label{relrightmodm322}
adk_i=dk_i \nu_{k_i}(a),\quad adx=dx \nu_x(a),\quad ady=dy \nu_y(a), \quad {\rm for\ all} \ a\in A \ {\rm and} \ 1 \leq i\leq n.
\end{equation}

Accordingly, the defining commutation rules in $\Omega^1A$ are given by 
\begin{align}
k_i dk_j &= dk_j k_i, & k_i dx &= dx a_i^{-1}k_i, & k_i dy &= dy a_i k_i, \label{reltx4} \\
x dk_i &= dk_i a_i x, & y dk_i &= dk_i a_i^{-1}y, & x dx &= dx x, \notag \\
x dy &= \rho^{-1}dy x, & y dx &= \rho dx y, & y dy &= dy y, \label{reltx4.}
\end{align}

for all $1\leq i,j\leq n$. Now, we extend the assignments
\[
k_i\mapsto dk_i,\quad x\mapsto dx,\quad y\mapsto dy
\]
to a $\Bbbk$-linear map $d:A\to \Omega^1A$ satisfying the Leibniz's rule. We must verify that $d$ is compatible with the defining relations of $A$.

From relation {\rm(}\ref{Cond(1)Ambi}{\rm)}, we get that
\begin{align*}
d(xk_i) &= dx k_i+x dk_i = dx k_i+dk_i a_i x, \\
d(\sigma(k_i)x) &= d(a_i k_i x) = a_i dk_i x+a_i k_i dx \\
   &= a_i dk_i x+dx k_i,
\end{align*}

and these two expressions coincide.

Similarly, relation {\rm(}\ref{Cond(2)Ambi}{\rm)} implies that
\begin{align*}
d(yk_i) &= dy k_i+y dk_i = dy k_i+dk_i a_i^{-1}y, \\
d(\sigma^{-1}(k_i)y) &= d(a_i^{-1}k_i y) = a_i^{-1}dk_i y+a_i^{-1}k_i dy \\
   &= a_i^{-1}dk_i y+dy k_i,
\end{align*}

so the second defining relation is also preserved.

Finally, applying $d$ to relation {\rm(}\ref{Cond(3)Ambi}{\rm)} yields
\begin{align*}
d(yx) = dy x+y dx = dy x+\rho dx y,
\end{align*}

while
\begin{align*}
d(\rho xy+u-\rho\sigma(u)) &= \rho dx y+\rho x dy+d(u-\rho\sigma(u)) \\
   &= \rho dx y+dy x+d(u-\rho\sigma(u)),
\end{align*}

because $x dy=\rho^{-1}dy x$. Since $u-\rho\sigma(u)\in \Bbbk$, we have that $d(u-\rho\sigma(u))=0$. Then $d(yx) = d(\rho xy+u-\rho\sigma(u))$, which shows that $d$ is well defined.

For each $1\leq i\leq n$, define $\Bbbk$-linear maps $\partial_{k_i},\partial_x,\partial_y:A\to A$ by requiring that
\[
d(a)=\sum_{i=1}^n dk_i \partial_{k_i}(a)+dx \partial_x(a)+dy \partial_y(a), \quad {\rm for} \  a\in A.
\]

Since $dk_1,\ldots,dk_n,dx,dy$ form a free right $A$-basis of $\Omega^1A$, these maps are well defined. As it can be seen, the PBW monomials  satisfy the relations given by
\begin{align}
\partial_{k_i}(k_1^{t_1}\cdots k_n^{t_n}x^{l_1}y^{l_2}) &= t_i k_1^{t_1}\cdots k_i^{t_i-1}\cdots k_n^{t_n}x^{l_1}y^{l_2}, \qquad 1\leq i\leq n, \notag \\
\partial_x(k_1^{t_1}\cdots k_n^{t_n}x^{l_1}y^{l_2}) &= l_1\prod_{i=1}^n a_i^{-t_i} k_1^{t_1}\cdots k_n^{t_n}x^{l_1-1}y^{l_2}, \label{partial_formulae} \\
\partial_y(k_1^{t_1}\cdots k_n^{t_n}x^{l_1}y^{l_2}) &= l_2 \rho^{-l_1}\prod_{i=1}^n a_i^{t_i} k_1^{t_1}\cdots k_n^{t_n}x^{l_1}y^{l_2-1}. \notag
\end{align}

In this way, $d(a) = 0$ if and only if $a\in \Bbbk$. Therefore, the differential calculus
\[
\Omega A=\bigoplus_{j=0}^{n+2}\Omega^jA
\]

is connected.

We now pass to higher-degree forms. The universal extension of $d$, subject to relations
{\rm(}\ref{reltx4}{\rm)} and {\rm(}\ref{reltx4.}{\rm)}, yields for $2\leq l\leq n+1$:
\begin{align} 
dk_{q(1)}\wedge \dotsb \wedge &\ dk_{q(l)} = (-1)^{\sharp}dk_{p(1)}\wedge \dotsb \wedge dk_{p(l)},\label{relasins1s2n1} \\ dk_{q(1)}\wedge \dotsb \wedge &\ dk_{q(s)}\wedge dx \wedge dk_{q(s+1)}\wedge \dotsb \wedge dk_{q(l-1)} \notag \\ &\ = (-1)^{l-s-2}\prod_{r=s+1}^{l-1} a_{r}^{-1} \bigwedge_{t=1}^{l-1}dk_{q(t)}\wedge dx, \quad 1\leq s \leq l-2, \label{relasins1s2n2} \\ dk_{q(1)}\wedge \dotsb \wedge &\ dk_{q(s)}\wedge dy \wedge dk_{q(s+1)}\wedge \dotsb \wedge dk_{q(l-1)} \notag \\ &\ = (-1)^{l-s-2}\prod_{r=s+1}^{l-1} a_r \bigwedge_{t=1}^{l-1}dk_{q(t)}\wedge dy, \quad 1\leq s \leq l-2, \label{relasins1s2n22} \\ dk_{q(1)}\wedge \dotsb \wedge &\ dk_{q(s)}\wedge dy\wedge dx \wedge dk_{q(s+1)} \wedge \dotsb \wedge dk_{q(l-2)} \notag \\ &\ = -\rho^{-1}\prod_{r=s+1}^{l-2} a_{r}^{-1}\bigwedge_{t=1}^{l-2}dk_{q(t)}\wedge dx \wedge dy, \quad 1\leq s \leq l-2,\label{relasins1s2n3} 
\end{align}
where
\[
q:\{1,\ldots,l\}\to \{1,\ldots,n\}
\]

is injective,
\[
p:\{1,\ldots,l\}\to \operatorname{Im}(q)
\]

is increasing, and $\sharp$ denotes the number of transpositions needed to transform $q$ into $p$.

Since the automorphisms $\nu_{k_1},\ldots,\nu_{k_n},\nu_x,\nu_y$ commute, no additional relations arise. In particular,
\begin{align*}
\Omega^{n+1}A = &\ \left(\bigoplus_{r=1}^{n}dk_1\wedge\cdots\wedge \widehat{dk_r}\wedge\cdots\wedge dk_n\wedge dx\wedge dy\right)A \\
&\ \oplus (dk_1\wedge\cdots\wedge dk_n\wedge dx)A\oplus(dk_1\wedge\cdots\wedge dk_n\wedge dy)A.
\end{align*}

Set
\[
\omega=dk_1\wedge\cdots\wedge dk_n\wedge dx\wedge dy.
\]

Then $\Omega^{n+2}A=\omega A \cong A$ as both a right and a left $A$-module, and
\[
\nu_{\omega} =\nu_{k_1}\circ\cdots\circ\nu_{k_n}\circ\nu_x\circ\nu_y.
\]

Thus, $\omega$ is a volume form on $A$.

For notational convenience, set
\[
z_i=k_i\text{ for } 1\leq i\leq n,\quad {\rm and} \ z_{n+1} = x, \ z_{n+2}=y,
\]

and define constants given by
\[
c_{i,j}=1 \text{ for } 1\leq i,j\leq n,\quad c_{i,n+1}=a_i,\quad c_{i,n+2}=a_i^{-1}\text{ for } 1\leq i\leq n.
\]

By Proposition \ref{BrzezinskiSitarz2017Lemmas2.6and2.7}(2), the calculus is integrable once one exhibits suitable dual bases. For $1\leq j\leq n+2$, define
\begin{align*}
\omega_i^j &= \bigwedge_{k=1}^{j} dz_{p_{i,j}(k)}, \ 1\leq i\leq \binom{n+2}{j}, \\
\bar{\omega}_i^{ n+2-j}&= (-1)^{\sharp_{i,j}}\prod_{(r,s)\in P_{i,j}} c_{r,s}^{-1} \bigwedge_{k=j+1}^{n+2} dz_{\bar p_{i,j}(k)}, \ 1\leq i\leq \binom{n+2}{j},
\end{align*}

where
\[
p_{i,j}:\{1,\ldots,j\}\to \{1,\ldots,n+2\}
\]

and
\[
\bar p_{i,j}:\{j+1,\ldots,n+2\}\to (\operatorname{Im}(p_{i,j}))^c
\]

are increasing injective maps, $\sharp_{i,j}$ is the number of transpositions needed to transform
\[
\big(\bar p_{i,j}(j+1),\ldots,\bar p_{i,j}(n+2),p_{i,j}(1),\ldots,p_{i,j}(j)\big)
\]

into $(1,\ldots,n+2)$, and
\[
P_{i,j}=\{(s,t)\in \{1,\ldots,j\}\times \{j+1,\ldots,n+2\}\mid p_{i,j}(s)<\bar p_{i,j}(t)\}.
\]

If $\omega'\in \Omega^jA$, then it can be written uniquely as
\[
\omega'=\sum_{i=1}^{\binom{n+2}{j}}\left(\bigwedge_{k=1}^{j} dz_{p_{i,j}(k)}\right)\alpha_i,\ \alpha_i\in A.
\]

This implies that 
$$
\sum_{i=1}^{\binom{n+2}{j}}\omega_i^j \pi_{\omega}\left(\bar{\omega}_i^{ n+2-j}\wedge \omega'\right) = \sum_{i=1}^{\binom{n+2}{j}}\left(\bigwedge_{k=1}^{j} dz_{p_{i,j}(k)}\right)\alpha_i = \omega'.
$$

Thus, the hypotheses of Proposition \ref{BrzezinskiSitarz2017Lemmas2.6and2.7}(2) are satisfied, $\omega$ is an integrating volume form, and therefore $\Omega A$ is a connected integrable differential calculus of dimension $n+2=\mathrm{GKdim}(A)$. This means that $A$ is differentially smooth.
\end{proof}

\section{Examples}\label{Illustrativeexamplesambiskew}

Next, we present some noncommutative ring families that can be expressed as ambiskew polynomial rings, in order to discuss the applicability of the results obtained in the previous section.

\subsection{Some noncommutative algebras on three generators}

\begin{example}\label{QAP}
Let $K=\Bbbk[t_1,\ldots,t_n]$ and fix nonzero scalars $a_1,\ldots,a_n,\rho\in \Bbbk^\times$. Define a $\Bbbk$-automorphism $\sigma$ of $K$ by $\sigma(t_i) = a_i t_i$ for $1\leq i\leq n$, and take $u=0$. Consider the iterated skew polynomial ring
\[
A=K[x;\sigma][y;\sigma^{-1},\delta] 
\quad {\rm with} \quad \delta(K)=0 \quad {\rm and} \quad \delta(x)=u-\rho\sigma(u)=0.
\]

and defining relations
\[
xt_i = a_i t_i x, \quad yt_i = a_i^{-1} t_i y \quad {\rm and} \quad yx = \rho xy.
\]

As it can be seen easily, the hypotheses of Theorem \ref{SiSigma} are satisfied, so $A$ is differentially smooth.

If we rename as 
\[
X_i=t_i \text{ for } 1\leq i\leq n,\ X_{n+1}=x \quad {\rm and} \quad X_{n+2}=y,
\]
then $A$ is precisely the {\em multiparameter quantum affine} $(n+2)$-{\em space}.
\end{example}

\begin{example}\label{Laurent}
Consider $K=\Bbbk[t_1^{\pm 1},\ldots,t_n^{\pm 1}]$, the Laurent commutative polynomial ring. Fix nonzero scalars $a_1,\ldots,a_n,\rho\in \Bbbk^\times$ and define $\sigma\in \operatorname{Aut}_{\Bbbk}(K)$ by $\sigma(t_i)=a_i t_i$ for $1\leq i\leq n$ with $u=0$. Then
\[
A = K[x;\sigma][y;\sigma^{-1},\delta], \ \delta(K)=0 \quad {\rm and} \quad \delta(x) = 0,
\]

with defining relations
\[
xt_i=a_i t_i x, \ yt_i=a_i^{-1} t_i y \quad {\rm and} \quad yx=\rho xy,
\]

and the corresponding relations for the invertibility of the indeterminates $t_i$'s. As in Example \ref{QAP}, all the hypotheses of Theorem
\ref{SiSigma} are satisfied and so the $\Bbbk$-algebra $A$ is differentially smooth.
\end{example}

\begin{example}\label{Exot}
Let $K=\Bbbk[t_1,\ldots,t_n]$ and let $\sigma\in \operatorname{Aut}_{\Bbbk}(K)$ be given by $\sigma(t_i) = a_i t_i$ for $a_i\in \Bbbk^\times$. Fix a multi-index $m=(m_1,\ldots,m_n)\in \mathbb{N}^n$, a scalar $\lambda\in \Bbbk^\times$, and define $u=\lambda t_1^{m_1}\cdots t_n^{m_n}$. 
Assume that $\rho a_1^{m_1}\cdots a_n^{m_n}=1$. Then
\[
\sigma(u)=\lambda a_1^{m_1}\cdots a_n^{m_n} t_1^{m_1}\cdots t_n^{m_n} =\rho^{-1}u,
\]

whence $u-\rho\sigma(u)=u-\rho(\rho^{-1}u)=0$. Note that 
\[
\rho^2\sigma^2(u)-2\rho\sigma(u)+u=\rho^2(\rho^{-2}u)-2\rho(\rho^{-1}u)+u= u-2u+u=0,
\]

i.e. $u$ is $(\sigma,\rho)$-balanced.

Theorem \ref{SiSigma} guarantees the differential smooth of the algebra given by 
\[
K[x;\sigma][y;\sigma^{-1},\delta], \ \delta(K)=0,\ \delta(x)=u-\rho\sigma(u)=0,
\]
\end{example}

\subsection{3-dimensional skew polynomial rings}

Bell and Smith \cite{BellSmith1990} defined a class of noncommutative algebras on three generators called {\em 3-dimensional skew polynomial rings} as those $\Bbbk$-algebras generated by the indeterminates $x, y, z$ restricted to relations 
\begin{align*}
yz - \alpha zy = &\, \lambda, \\
zx - \beta xz = &\, \mu, \quad {\rm and} \\
xy - \gamma yx = &\, \nu
\end{align*}

such that
\begin{enumerate}
\item [\rm (i)] $\lambda, \mu, \nu\in \Bbbk+\Bbbk x+\Bbbk y+\Bbbk z$, and $\alpha, \beta, \gamma \in \Bbbk\ \backslash\ \{0\}$, and

\item [\rm (ii)] standard monomials $\left\{x^iy^jz^l\mid i,j,l\ge 0\right\}$ are a $\Bbbk$-basis of the algebra $A$.
\end{enumerate}

The complete classification of these (fifteen) algebras can be found in \cite[Definition C4.3 and Theorem C.4.3.1]{Rosenberg1995} or \cite{ReyesSuarez20173D} and \cite[Proposition 2.8 and Remark 2.9]{RubianoReyes20263D}. Jordan said that not all of 3-dimensional skew polynomial rings are iterated skew polynomial rings over $\mathbb{C}$, but these can be obtained from the construction in Definition \ref{Jordan1995Section1.1} with $K = \mathbb{C}[t]$ \cite[Section 1.5]{Jordan1995}. Table \ref{3DAmbiskew} presents the details of Jordan's statement.

Recently, in \cite[Theorem 3.1]{RubianoReyes20263D}, the authors proved that several $3$-dimensional skew polynomial algebras defined and classified by Bell and Smith \cite{BellSmith1990} are differentially smooth. Theorem \ref{SiSigma} can be applied in that setting.

Let $K=\Bbbk[t]$ and consider $\sigma\in \operatorname{Aut}_{\Bbbk}(K)$ given by $\sigma(t) = at$ for $a\in \Bbbk^\times$ with $\rho\in \Bbbk^\times$. Fix $m\geq 1$ and consider $u=\lambda t^m,
\, \lambda\in \Bbbk$ satisfying $\rho a^m=1$. Let
\[
A=K[x;\sigma][y;\sigma^{-1},\delta] \quad {\rm with} \quad \delta(K)=0 \quad {\rm and} \quad \delta(x)=u-\rho\sigma(u). 
\]

We claim that $A$ satisfies all the hypotheses of Theorem \ref{SiSigma}. Indeed, since $\sigma(u)=\lambda a^m t^m$, we obtain that $u-\rho\sigma(u)
=
\lambda(1-\rho a^m)t^m=0$, and so $\delta(x)=0$. The defining relations of $A$ become
\[
xt=atx,\  yt=a^{-1}ty \quad {\rm and} \quad  yx=\rho xy.
\]

Moreover, $\sigma^2(u)=\lambda a^{2m}t^m$, and thus
\begin{align*}
\rho^2\sigma^2(u)-2\rho\sigma(u)+u & =
\lambda\big(\rho^2a^{2m}-2\rho a^m+1\big)t^m \\ 
& =
\lambda(\rho a^m-1)^2t^m=0.
\end{align*}

This shows that $u$ is $(\sigma,\rho)$-balanced. Since $u-\rho\sigma(u)=0\in \Bbbk$,
condition {\rm(2)} of Theorem \ref{SiSigma} also holds. Furthermore, $\sigma(t)=at$ and so condition {\rm(3)} is satisfied with $k_1=t$ and $a_1=a$. Condition {\rm(4)} is precisely the assumption $\rho a^m=1$. Finally, $\mathrm{GKdim}(\Bbbk[t])=1$, and hence $A$ is differentially smooth.

Note that if we rename the generators $t,y,x$ as $y,z,x$, respectively, the above
relations can be rewritten as
\[
yz=azy,\  zx=\rho xz \quad {\rm and} \quad xy=ayx.
\]

so that this construction yields a family of $3$-dimensional skew polynomial algebras. In particular:
\begin{itemize}
    \item if $a\neq 1$ and $\rho\neq a$, then we obtain algebras of type {\rm(3)(ii)}
    with $b=0$, namely
    \[
    yz - \alpha zy = 0,\  zx - \beta xz = 0 \quad {\rm and} \quad xy - \alpha yx = 0,
    \]
    
    by taking $\alpha = a$ and $\beta = \rho$.
    
    \item If $a=1$ and $\rho\neq 1$, then we recover the subclass of type {\rm(2)(iv)}
    with $b=0$, that is, 
    \[
    yz - zy = 0,\  zx - \beta xz = 0 \quad {\rm and} \quad xy - yx = 0.
    \]
    
    \item If $a=\rho=1$, then $A$ reduces to the commutative polynomial algebra $\Bbbk[x,y,z]$. 
\end{itemize}

\begin{landscape}
{\normalsize{
\begin{table}[h!]
\centering
\begin{tabular}{|c|c|c|c|}
\hline
{\bf Algebra} & {\bf Skew polynomial ring} & {\bf Automorphisms} & {\bf Derivations} \\ \hline
(1) & $\Bbbk[x][y;\sigma_1][z;\sigma_2]$ & $\sigma_1(x)=\gamma^{-1}x$, $\sigma_2(x)=\beta x$,  $\sigma_2(y)=\alpha^{-1}y$ & \\ \hline
(2) (i) & $\Bbbk[z][y;\sigma_1][x;\sigma_2,\delta_2]$ & $\sigma_1(z)=z+1$, $\sigma_2(z) = \beta^{-1}z$, $\sigma_2(y) = y-1$ & $\delta(z) = -\beta^{-1}y$, $\delta(y)=x$\\ \hline
(2) (ii) & $\Bbbk[z][x;\sigma][y;\sigma^{-1},\delta]$ & $\sigma(z) = \beta z, \, \sigma^{-1}(x) = x$ & $\delta(z) = z\, \delta(x) = -x$\\ \hline
(2) (iii) & $\Bbbk[y][x;\sigma][z;\sigma^{-1},\delta]$ & $\sigma(y) = y\, \sigma^{-1}(x) = \beta x$ & $\delta(y) = 0\, \delta(x) = y$ \\ \hline
(2) (iv) & $\Bbbk[y][x;\sigma][z;\sigma^{-1},\delta]$ & $\sigma(y) = y\, \sigma(x)=\beta^{-1}x$ & $\delta(y) = 0\, \delta(x) = b$ \\ \hline
(2) (v) & $\Bbbk[z][x;\sigma][y;\sigma^{-1},\delta]$ & $\sigma(z) = \beta^{-1}z, \, \sigma(x) = x$ & $\delta(z) = az, \, \delta(x) = -x$ \\ \hline
(2) (vi) & $\Bbbk[z][x;\sigma][y;\sigma^{-1},\delta]$ & $\sigma(z) = \beta^{-1}z, \, \sigma(x) = x$ & $\delta(z) = z, \, \delta(x) = 0$ \\ \hline
(3) (i) & $\Bbbk[y][x;\sigma][z;\sigma^{-1},\delta]$ & $\sigma(y)=\alpha y, \, \sigma(x)=\beta^{-1}x$  & $\delta(y) = 0, \, \delta(x) =  y+b$ \\ \hline
(3) (ii) & $\Bbbk[y][x;\sigma][z;\sigma^{-1},\delta]$ & $\sigma(y) = \alpha^{-1}y, \, \sigma(x) = \beta^{-1}x$ & $\delta(y) = 0, \, \delta(x) = b$\\ \hline
(4) & $\Bbbk[z][x;\sigma][y;\sigma^{-1},\delta]$ & $\sigma(z) = \alpha^{-1}z, \, \sigma(x) = \alpha^{-1}x$  & $\delta(z) = a_1x+b_1, \, \delta(x) = -a_3z-b_3$ \\ \hline
(5)(i)  & $\Bbbk[z][x;\sigma][y;\sigma^{-1},\delta]$ & $\sigma(z) = z, \, \sigma^{-1}(x) = x$ & $\delta(z) = x, \, \delta(x) = z$ \\ \hline
(5)(ii) & $\Bbbk[z][x;\sigma][y;\sigma^{-1},\delta]$ & $\sigma(z) = z\, \sigma^{-1}(x) = x$ & $\delta(z) = 0, \, \delta(x) = -z$ \\ \hline
(5) (iii) & $\Bbbk[z][x;\sigma][y;\sigma^{-1},\delta]$ & $\sigma(z) = z, \, \sigma^{-1}(x) = x$ & $\delta(z) = 0, \, \delta(x) = -b$ \\ \hline
(5)(iv) & $\Bbbk[y][x;\sigma][z;\sigma^{-1},\delta]$ & $\sigma(y) = y, \, \sigma^{-1}(x) = x$ & $\delta(y) = -y, \, \delta(x) = x+y$ \\ \hline
(5)(v) & $\Bbbk[z][x;\sigma][y;\sigma^{-1},\delta]$ & $\sigma(z) = z, \, \sigma^{-1}(x) = x$ & $\delta(z)=az, \, \delta(x) = -z$\\ \hline
\end{tabular}
\caption{3-dimensional skew polynomial algebras as iterated skew polynomial rings (c.f. \cite[Theorem C.4.3.1]{Rosenberg1995} or \cite[Proposition 2.8 and Remark 2.9]{RubianoReyes20263D})}
\label{3DAmbiskew}
\end{table}
}}
\end{landscape}

\subsection{\texorpdfstring{$q$}{Lg}-skew extensions}

With the aim of studying different kinds of noncommutative rings appearing in $q$-Weyl algebras, enveloping algebras of solvable Lie superalgebras, and coordinate rings of quantum matrices, for a $\Bbbk$-algebra $R$, Goodearl \cite{Goodearl1992} introduced $q$-{\em skew extensions}, namely skew polynomial rings $R[y; \tau, \delta]$ in which $\tau^{1}\delta \tau = q\delta$ for some non-zero scalar $q\in \Bbbk$. The pair $(\tau, \delta)$ is called a $q$-{\em skew derivation on} $R$.

Two years later, he and Letzter \cite{GoodearlLetzter1994} investigated the prime ideals in such rings. In Section 14 of their work, they defined the following class of noncommutative rings.

Let $T$ be a Noetherian algebra over a field $\Bbbk$, let $\tau$ be a $\Bbbk$-algebra automorphism of $T$, and set $R = T[x; \tau^{-1}]$. Choose non-zero elements $\alpha, \beta \in \Bbbk$ and a central element $d\in T$ such that $\tau(d) =  \alpha^{-1}d$, and extend $\tau$ to the automorphism of $R$ such that $\tau(x) = \beta x$. There is a unique $\Bbbk$-linear $\tau$-derivation $\delta$ on $R$ such that $\delta = 0$ on $T$ while $\delta(x) = d$. Let $S =  R[y; \tau, \delta]$. Then $S$ may be defined as the $\Bbbk$-algebra generated by $T\cup \{x, y\}$ with the relations
\begin{align}
yx = &\, \beta xy + d, \\
ax = &\, x\tau(a), \quad {\rm and} \\
ya = &\, \tau(a)y, \quad {\rm for\ all} \, \, a\in T.
\end{align}

Since the $\tau$-derivations $\tau^{-1}\delta \tau$ and $\delta$ both vanish on $T$ and since $\tau^{-1}\delta \tau(x) = \alpha \beta d = \alpha \beta \delta(x)$, it follows that $\tau^{-1} \delta \tau = \alpha \beta \delta$, and so $(\tau, \delta)$ is a $q$-skew derivation, where $q = \alpha \beta$.

Let us look at a couple of examples that illustrate this definition.
\begin{itemize}
\item Let $T = \Bbbk[z]$ and consider $\tau$ as the $\Bbbk$-algebra automorphism of $\Bbbk[z]$ given by $\sigma(z) = z+1$, and $q$ is a nonzero element of $\Bbbk$ which is neither $1$ nor a root of unity. Then $S$ is an algebra of the type constructed above with $T = \Bbbk[z]$ and $d=1$, while $\alpha = 1$ and $\beta = q$.

\item Let $q$ be a non-zero element of $\Bbbk$. The {\em coordinate rings of quantum} $2\times 2$ {\em matrices} $\mathcal{O}_q(M_2(\Bbbk))$ can be described as the $\Bbbk$-algebra with generators $x, u, v, y$ and relations
\begin{align*}
        ux = &\, q^{-2}xu, & vx = &\, q^{-2}xv, & uv = &\, vu \\
        yu = &\, q^{-2}uy, & yv = &\, q^{-2}vy, & xy - yx = &\, (q^2 - q^{-2})uv.
\end{align*}

It is well-known that the monomials $u^{g} v^{h} x^{i} y^{j}$ for nonnegative integers $g, h, i, j$, are linearly independent over $\Bbbk$.

The $\Bbbk$-subalgebra of $\mathcal{O}_q(M_2(\Bbbk))$ generated by $u$ and $v$ is the commutative polynomial ring $T = \Bbbk[u, v]$, while the $\Bbbk$-subalgebra of $\mathcal{O}_q(M_2(\Bbbk))$ generated by $u, v, x$ is the skew polynomial ring $R = T[x; \tau^{-1}]$, where $\tau$ is the $\Bbbk$-algebra automorphism of $T$ such that $\tau(u) = q^{-2}u$ and $\tau(v) = q^{-2}v$. 

In this way, $\mathcal{O}_q(M_2(\Bbbk))$ can be expressed as the skew polynomial ring $R[y; \tau, \delta]$, where $\tau$ has been extended to an automorphism of $R$ mapping $x$ to itself and $\delta$ is the $\Bbbk$-linear $\tau$-derivation on $R$ such that $\delta(u) = \delta(v) = 0$ while $\delta(x) = (q^{-2} - q^2)uv$. Thus $\mathcal{O}_q(M_2(\Bbbk))$ is an algebra of the type constructed above with $d = (q^{-2} - q^2)uv$ and $\beta = 1$. Note that $\tau(d) = q^{-4}d$, and so $(\tau, \delta)$ is a $q^4$-skew derivation on $R$.

About algebras defined by Goodearl and Letzter, Jordan \cite[Section 1.5]{Jordan1995} said the following: \textquotedblleft When $T=A$ these, except where $q = 1$ and $d\neq 0$, are the rings given by the construction of Definition \ref{Jordan1995Section1.1} in the case where $\sigma(u) = vu$ for some $v\in\Bbbk \, \backslash \, \{0\}$, so that $u - \rho \sigma(u) = (1 - \rho v)u$. The element $d$ in corresponds to the element $(1 - \rho v)u$ here\textquotedblright.

Having in mind Jordan's remark, Theorem \ref{SiSigma} applies to the subclass of Jordan's construction obtained by identifying $K=T$, $\rho=\beta$, and $\sigma=\tau^{-1}$, provided that $\sigma$ acts diagonally on the chosen generators of $K$ and that the remaining hypotheses of Theorem \ref{SiSigma} are satisfied. In particular, in the case where $\sigma(u)=vu$ for some $v\in\Bbbk^\times$, the $(\sigma,\rho)$-balance condition becomes
\[
\rho^2\sigma^2(u)-2\rho\sigma(u)+u=(\beta v-1)^2u.
\]
Hence, whenever $u\neq 0$, this condition is equivalent to $\beta v=1$. Under this assumption, we get that
\[
d=u-\rho\sigma(u)=(1-\beta v)u=0,
\]

so condition {\rm (2)} of Theorem \ref{SiSigma} is satisfied. This means that Theorem \ref{SiSigma} covers precisely the diagonal situations in Jordan's framework for which conditions {\rm (3)} and {\rm (5)} hold and each monomial of $u$ has $\sigma$-weight $\beta^{-1}$, that is, condition {\rm (4)}. In particular, the exceptional case mentioned by Jordan, namely $q=1$ and $d\neq 0$, does not fall within this framework. 
\end{itemize}

\subsection{Generalized Weyl algebras}

Ambiskew polynomial rings are closely related to the generalized Weyl algebras defined and studied by Bavula \cite{Bavula1992, BavulaJordan2001}. 

Given an automorphism $\sigma$ and a central element $a$ of a ring $B$, the {\em generalized Weyl algebra} $B(\sigma, a)$ is the ring extension of $B$ generated by $X^{-}$ and $X^{+}$ subject to the relations
\begin{equation}
   X^{-} X^{+} = a \quad {\rm and} \quad  X^{+} X^{-} = \sigma(a), 
\end{equation}

and, for all $b\in B$, 
\begin{equation}
X^{+} b = \sigma(b) X^{+} \quad {\rm and} \quad X^{-} \sigma(b) = bX^{-}.
\end{equation}

If $c = \sigma(a) - pa$ for some element $a\in B$, Jordan and Wells \cite{JordanWells1996} say that the four-tuple $(B, \sigma, c, p)$ is {\em conformal}. In this case, if $z := yx - \sigma(a) = p(xy - a)$, then 
\begin{align}
    yz = &\, py(xy - a) = p(yxy - \sigma(a)y) = pzy, \label{Jordan2000(4a)} \\
    zx = &\, p(xy - a)x = p(xyx - x\sigma(a)) = pxz, \label{Jordan2000(4b)} 
\end{align}

and, for all $b\in B$, 
\begin{equation}\label{Jordan2000(4c)} 
zb = p (x\sigma(b) y - ab) = bz.
\end{equation}

This means that $z$ is a normal element of $B$ called the {\em Casimir element} of $B$. This induces a $\Bbbk$-automorphism $\zeta$ of $B$ such that for all $b\in B$, 
$$
\zeta(b) = b, \quad \zeta(y) = p^{-1}y \quad {\rm and} \quad \zeta(x) = px.
$$

Of course, if $p = 1$ then $z$ is central.

Note that if we consider $\omega := yx = pxy + c$, then $b\omega = \omega b$ for all $b\in B$, and in the conformal case, we get $\omega = z + \sigma(a)$ and $B[z] = B[\omega]$.

We can extend $\sigma$ to a $\Bbbk$-automorphism - also denoted $\sigma$ - of $B[\omega]$ by setting $\sigma(\omega) = p\omega + \sigma(c)$ and $\sigma^{-1}(\omega) = p^{-1}(\omega - c)$. In this way, 
$$
y\omega = \sigma(\omega) y \quad {\rm and} \quad x\omega = \sigma^{-1}(\omega) x.
$$

In the conformal case, we have that $\sigma(z) = pz$.

The following result establishes the relation between ambiskew polynomial rings and generalized Weyl algebras.

\begin{proposition}[{\cite[Lemma 1.2]{Bavula1996}, \cite[Corollary 2.6]{JordanWells1996}}]\label{Jordan2000Proposition2.1}
The ambiskew polynomial ring $R(B, \sigma, c, p)$ is isomorphic to the generalized Weyl algebra $B[\omega](\sigma, \omega)$, where $\sigma$ is extended to $B[\omega]$ by setting $\sigma(\omega) = p\omega + \sigma(c)$. In the conformal case, $R(B, \sigma, c, p) \cong B[z](\sigma, z + \sigma(a))$, where $\sigma(z) = pz$.
\end{proposition}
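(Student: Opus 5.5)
The plan is to construct mutually inverse algebra homomorphisms between $R(B,\sigma,c,p)$ and $B[\omega](\sigma,\omega)$, and to prove bijectivity by comparing $\Bbbk$-bases (equivalently, free module structures over $B[\omega]$) on both sides. First I would fix conventions so that they agree with the displayed relations $y\omega=\sigma(\omega)y$ and $x\omega=\sigma^{-1}(\omega)x$. Inside $R=R(B,\sigma,c,p)$ this means $yb=\sigma(b)y$, $xb=\sigma^{-1}(b)x$ and $yx=pxy+c$.

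Set $\omega=yx$. Since $yx$ commutes with $B$, the subalgebra $B[\omega]\subseteq R$ is commutative. Using the PBW basis $\{x^iy^j\}$ of $R$ as an iterated Ore extension over $B$, I would check that the powers $\omega^k=(yx)^k$ have leading term $p^{\binom{k}{2}}x^ky^k$ modulo lower terms. Hence $\omega$ is transcendental over $B$, and $B[\omega]$ is a genuine polynomial ring. From $yx=pxy+c$ I would then derive
\[
xy=p^{-1}(\omega-c)=\sigma^{-1}(\omega), \qquad y\omega=(pxy+c)y=\sigma(\omega)y, \qquad x\omega=\sigma^{-1}(\omega)x .
\]
These show that the extension of $\sigma$ to $B[\omega]$ given by $\sigma(\omega)=p\omega+\sigma(c)$ is the one induced by conjugation.

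Next I would define $\Phi\colon B[\omega](\sigma,\omega)\to R$ as the identity on $B$, with $\omega\mapsto yx$, $X^{+}\mapsto y$ and $X^{-}\mapsto x$. The GWA relations $X^-X^+=$ (the chosen central element), $X^+X^-=\sigma(\cdot)$, $X^+b=\sigma(b)X^+$ and $X^-\sigma(b)=bX^-$ for $b\in B[\omega]$ are then exactly the identities above. If the element appearing in the presentation is $\omega$ only after swapping the roles of $X^{\pm}$, or after replacing $\sigma$ by $\sigma^{-1}$, I will simply relabel; matching these sign and direction conventions is the fiddly part. In the other direction, $\Psi\colon R\to B[\omega](\sigma,\omega)$ sends $b\mapsto b$, $y\mapsto X^+$ and $x\mapsto X^-$. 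The ambiskew relations are preserved because $X^+X^-=pX^-X^++c$ follows from $\sigma(\omega)=p\omega+\sigma(c)$ together with the defining relations of the GWA. The composites $\Phi\Psi$ and $\Psi\Phi$ fix generators, so both are identities. Consequently no basis comparison is strictly needed, although it provides a sanity check: both algebras are $\mathbb{Z}$-graded with degree-$0$ part $B[\omega]$ and degree-$n$ parts $B[\omega]y^n$ or $B[\omega]x^{-n}$.

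For the conformal case, I would write $c=\sigma(a)-pa$ and apply the change of variable $z=\omega-\sigma(a)$, so that $B[\omega]=B[z]$. Then $\sigma(z)=p\omega+\sigma(c)-\sigma^2(a)=p(\omega-\sigma(a))=pz$, using $\sigma(c)=\sigma^2(a)-p\sigma(a)$. Hence $B[\omega](\sigma,\omega)=B[z](\sigma,z+\sigma(a))$. The main obstacle I expect is not conceptual but bookkeeping: ensuring that the direction of $\sigma$ versus $\sigma^{-1}$ in the ambiskew presentation matches the GWA conventions, and verifying the transcendence of $\omega$ over $B$ via the leading-term computation.
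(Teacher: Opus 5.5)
The paper does not prove this proposition; it quotes it from Bavula and Jordan--Wells. Your argument therefore has to stand on its own, and at the decisive step it does not. Take exactly the conventions you fix: $yb=\sigma(b)y$, $xb=\sigma^{-1}(b)x$, $yx=pxy+c$, $\omega=yx$, $\sigma(\omega)=p\omega+\sigma(c)$, together with the paper's $X^-X^+=a$, $X^+X^-=\sigma(a)$, $X^+b=\sigma(b)X^+$. Under these, neither of your maps respects the relations.

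\begin{itemize}
\item For $\Phi$ you need $\Phi(X^-)\Phi(X^+)=\Phi(\omega)$, i.e.\ $xy=yx$. In fact $xy=p^{-1}(\omega-c)=\sigma^{-1}(\omega)$.
\item For $\Psi$, the GWA relations give $X^+X^-=\sigma(\omega)=p\,X^-X^++\sigma(c)$, not $p\,X^-X^++c$ as you claim. So $\Psi$ is well defined only when $\sigma(c)=c$.
\end{itemize}

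The relabelings you keep in reserve do not help. Sending $X^+\mapsto x$ breaks $X^+b=\sigma(b)X^+$, because $xb=\sigma^{-1}(b)x$. Passing to $\sigma^{-1}$ yields $B[\omega](\sigma^{-1},\omega)$, which is not the algebra in the statement. What the identity-on-$B[\omega]$ identification $X^+\leftrightarrow y$, $X^-\leftrightarrow x$ actually proves is $R(B,\sigma,c,p)\cong B[\omega](\sigma,xy)=B[\omega](\sigma,\sigma^{-1}(\omega))$.

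The missing idea is that the isomorphism onto $B[\omega](\sigma,\omega)$ must twist the coefficient ring by $\sigma$, rather than restrict to the identity on $B$. One way to do this:

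\begin{itemize}
\item Define $\Psi(b)=\sigma(b)$ for $b\in B$, $\Psi(y)=X^+$, $\Psi(x)=X^-$. Then $\Psi(y)\Psi(x)=X^+X^-=p\,\Psi(x)\Psi(y)+\sigma(c)=p\,\Psi(x)\Psi(y)+\Psi(c)$. The identities $X^+\sigma(b)=\sigma^2(b)X^+$ and $X^-\sigma(b)=bX^-$ give the other two relations.
\item The inverse is $\Phi|_{B[\omega]}=\sigma^{-1}$ followed by evaluation $\omega\mapsto yx$, with $\Phi(X^+)=y$ and $\Phi(X^-)=x$. This now satisfies $\Phi(X^-X^+)=xy=\Phi(\omega)$ and $\Phi(X^+X^-)=yx=\Phi(\sigma(\omega))$.
\end{itemize}

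Equivalently, keep your identity-on-$B$ maps with target $B[\omega](\sigma,\sigma^{-1}(\omega))$. Then compose with the general isomorphism $D(\sigma,a)\cong D(\sigma,\tau(a))$, given by $d\mapsto\tau(d)$ and $X^\pm\mapsto X^\pm$. This holds for any automorphism $\tau$ of $D$ commuting with $\sigma$; here take $\tau=\sigma$ and $D=B[\omega]$.

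With that in place, your conformal-case computation of $\sigma(z)=pz$ for $z=\omega-\sigma(a)$ goes through unchanged.

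Two small slips:
\begin{itemize}
\item $(yx)^k$ has leading coefficient $p^{\binom{k+1}{2}}$, not $p^{\binom{k}{2}}$. This is harmless, and transcendence of $\omega$ is not needed once you have mutually inverse maps.
\item $(pxy+c)y$ equals $\omega y$, not $y\omega$. The correct line is $y\omega=y(pxy+c)=p\,\omega y+\sigma(c)y$.
\end{itemize}
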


As an immediate consequence of Proposition \ref{Jordan2000Proposition2.1}, one obtains an analogue of Theorem \ref{SiSigma} for generalized Weyl algebras.

\begin{corollary}\label{GWASiSigma}
Let $W=B[\omega](\sigma,\omega)$ be the generalized Weyl algebra associated, via Proposition \ref{Jordan2000Proposition2.1}, to the ambiskew polynomial ring $R(B,\sigma,c,p)$, where $B$ is a finitely generated commutative $\Bbbk$-algebra generated by $b_1,\dots,b_n$. Assume that there exists an element
$$
u=\sum_{j=0}^{m}\beta_j b_1^{\alpha_{1j}}\cdots b_n^{\alpha_{nj}}\in B,
$$
with $\beta_j\in\Bbbk$ and $\alpha_{1j},\dots,\alpha_{nj}\in\mathbb{N}$ for all $0\leq j\leq m$, such that
$$
c=u-p\sigma(u).
$$
Suppose further that:
\begin{enumerate}
    \item[\rm (1)] $u$ is $(\sigma,p)$-balanced,
    \item[\rm (2)] $c=u-p\sigma(u)\in \Bbbk$;
    \item[\rm (3)] $\sigma(b_i)=a_i b_i$ for all $1\leq i\leq n$, with $a_i\in \Bbbk^\times$;
    \item[\rm (4)] $\displaystyle p\prod_{i=1}^{n}a_i^{\alpha_{ij}}=1,\text{ for } 0\leq j\leq m$;
    \item[\rm (5)] $\text{GKdim}(B)=n$.
\end{enumerate}
Then $W$ is differentially smooth.
\end{corollary}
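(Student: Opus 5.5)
The plan is to transport Theorem \ref{SiSigma} along the isomorphism of Proposition \ref{Jordan2000Proposition2.1}. Differential smoothness is invariant under algebra isomorphisms. If $\phi: A \to W$ is an isomorphism and $(\Omega A, d)$ is a connected integrable calculus of dimension $\mathrm{GKdim}(A)$, then $\Omega W$ is obtained by transport of structure, with $d_W = \phi_* \circ d \circ \phi^{-1}$, twisting automorphism $\phi\nu\phi^{-1}$, and bimodule isomorphisms $\Theta_k$ transported accordingly. The density, connectedness, volume form and the conditions of Proposition \ref{integrableequiva} all carry over unchanged. Gelfand--Kirillov dimension is also an isomorphism invariant, as is affineness. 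So it suffices to show that $R(B,\sigma,c,p)$ is differentially smooth.

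The key step is to identify $R(B,\sigma,c,p)$ with an iterated skew polynomial ring $K[x;\sigma'][y;\sigma'^{-1},\delta]$ of the form in Definition \ref{Jordan1995Section1.1}, and then check hypotheses (1)--(5) of Theorem \ref{SiSigma}.

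\emph{Identification.} Take $K = B = \Bbbk[b_1,\ldots,b_n]$ and $\rho = p$. The hypothesis $c = u - p\sigma(u)$ says the ambiskew relation $yx = pxy + c$ is exactly relation (\ref{Cond(3)Ambi}). This is the same reduction made in Section 2, where an ambiskew ring with $v = u - \rho\sigma(u)$ was matched with $K[x;\sigma][y;\sigma^{-1},\delta]$.

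\emph{Matching the automorphism convention.} The paper's ambiskew convention is $xr = \alpha(r)x$. The generalized Weyl algebra convention $X^{+}b = \sigma(b)X^{+}$ may swap the roles of $x$ and $y$ in $R(B,\sigma,c,p)$. I would fix the convention so that $x$ corresponds to $X^{+}$. If instead the roles are reversed, replace $\sigma$ by $\sigma^{-1}$: the $a_i$ become $a_i^{-1}$, and conditions (1), (3), (4) must be rechecked in that form. This bookkeeping of conventions is the step I expect to be the main obstacle. Conditions (2) and (5) are insensitive to it; condition (4) and the balance condition need care, and they may change to $p\prod a_i^{-\alpha_{ij}} = 1$. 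I would first try the direct convention, where all five conditions of Corollary \ref{GWASiSigma} literally coincide with those of Theorem \ref{SiSigma}.

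\emph{Conclusion.} With the identification made, hypotheses (1)--(5) transfer verbatim: $u$ is $(\sigma,p)$-balanced; $c \in \Bbbk$; $\sigma$ is diagonal on the $b_i$; the weight condition holds; and $\mathrm{GKdim}(B) = n$. Theorem \ref{SiSigma} then gives differential smoothness of $R(B,\sigma,c,p)$, of GK dimension $n+2$, and the transport argument of the first paragraph yields the result for $W$. For a concrete description of the calculus on $W$, the generators $dX^{\pm}$ and $db_i$ correspond to $dx, dy, dk_i$, and $\omega$ is sent to $db_1\wedge\cdots\wedge db_n\wedge dX^{+}\wedge dX^{-}$.
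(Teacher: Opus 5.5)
Your proposal is correct and is essentially the paper's proof: pass to $R(B,\sigma,c,p)$ via Proposition \ref{Jordan2000Proposition2.1}, write it as $B[x;\sigma][y;\sigma^{-1},\delta]$ with $\delta(x)=c=u-p\sigma(u)$, and apply Theorem \ref{SiSigma} with $K=B$ and $\rho=p$; your transport-of-structure paragraph only makes explicit the isomorphism invariance that the paper uses tacitly. The convention issue you single out as the main obstacle is harmless: (3) and (4) give $\sigma(u)=p^{-1}u$, hence $c=0$ and the relation is $yx=pxy$ whichever of $x,y$ plays the role of $X^{+}$, so in the reversed orientation you can apply Theorem \ref{SiSigma} with $\sigma^{-1}$ and $u=0$, exactly as in Example \ref{QAP}.
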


\begin{proof}
By Proposition \ref{Jordan2000Proposition2.1}, the generalized Weyl algebra $W=B[\omega](\sigma,\omega)$ is isomorphic to the ambiskew polynomial ring $R(B,\sigma,c,p)$. Since $c=u-p\sigma(u)$, the latter can be written as the iterated skew polynomial ring
$$
B[x;\sigma][y;\sigma^{-1},\delta],
$$
where $\delta(B)=0$ and $\delta(x)=u-p\sigma(u)$. Therefore, all the hypotheses of Theorem \ref{SiSigma} are satisfied with $K=B$ and $\rho=p$. Hence $R(B,\sigma,c,p)$ is differentially smooth.
\end{proof}

\subsection{Down-up algebras}\label{DownupalgebrasBenkartRoby1998}

The {\em down-up algebras} $A(\alpha, \beta, \gamma)$, with $\alpha, \beta, \gamma \in \mathbb{C}$ (these algebras were originally defined over $\mathbb{C}$ but can be defined over an arbitrary field) were introduced and investigated by Benkart and Roby \cite{Benkart1998, BenkartRoby1998} as generalizations of algebras by a pair of operators: the \textquotedblleft down\textquotedblright\, and \textquotedblleft up\textquotedblright\, acting on the vector space $\mathbb{C}P$ for certain partially ordered sets $P$. Kirkman et al. \cite{Kirkmanetal1999} proved that a sufficient and necessary condition to assert that a down-up algebra $A(\alpha, \beta, \gamma)$ be Noetherian is that $\beta \neq 0$. Benkart \cite{Benkart1998} proved that for $\lambda \neq 0$, $A(\alpha, \beta, \gamma) \cong A(\alpha, \beta, \lambda \gamma)$, which means that will be no loss of generality in assuming that $\gamma = 1$.

Since Jordan \cite[Theorem 3.1]{Jordan2000} proved that down-up algebras can be expressed as ambiskew polynomial rings, next we discuss the applicability of the results obtained in the previous section to these algebras. We consider the notation and the four cases described by Jordan \cite[Section 3]{Jordan2000}.

For arbitrary $\alpha, \beta, \gamma \in \mathbb{C}$, the {\em down-up algebra} $A(\alpha, \beta, \gamma)$ is the $\mathbb{C}$-algebra generated by the indeterminates $d$ and $u$ subject to the relations
\begin{align}
    d^2 u = &\, \alpha dud + \beta ud^2 + \gamma d, \label{Jordan2000(1a)}\\
    du^2 = &\, \alpha u du + \beta u^2 d + \gamma u. \label{Jordan2000(1b)}
\end{align}

Let $\alpha, \beta, \gamma \in \mathbb{C}$. Assume that $\beta \neq 0$ and let $\mu_1, \mu_2$ be the roots, necessarily non-zero, of the equation
\begin{equation}\label{Jordan2000(10)}
    \beta X^2 + \alpha X - 1 = 0
\end{equation}

and let $H$ be the subgroup $\langle \mu_1, \mu_2 \rangle$ of $\mathbb{C}^{\times}$. It is well-known that the defining automorphism $\sigma$ of $\mathbb{C}[t, z]$ will always be of one of the following three types
\begin{alignat*}{3}
  & z \mapsto \lambda z, \quad && t\mapsto \mu t \quad  && (\lambda, \mu \in \mathbb{C}^{\times})\\
  & z \mapsto \lambda z, && t \mapsto t + \nu \quad && (\lambda, \mu \in \mathbb{C}^{\times})\\
  & z \mapsto \lambda z + \sum_{\lambda = \mu^{i}} \eta_i t^{i}, \quad && t \mapsto \mu t \quad && (\lambda, \mu \in \mathbb{C}^{\times}, \, \eta_i \in \mathbb{C}).
\end{alignat*}

These are known as {\em triangular} automorphisms \cite{Lane1975} and every triangular automorphism is conjugate to an automorphism of one of the three listed types \cite[Proposition 1]{Smith1984}.

The next result is one of the most important proved by Jordan \cite{Jordan2000}.

\begin{proposition}[{\cite[Theorem 3.1]{Jordan2000}}]\label{Jordan2000Theorem3.1}
Let $R$ be a $\mathbb{C}$-algebra. Then $R$ is isomorphic to a down-up algebra $A(\alpha, \beta, \gamma)$ with $\beta \neq 0$ if and only if $R$ is isomorphic to an ambiskew polynomial ring $R(\mathbb{C}[t], \sigma, c, p)$ for some $\mathbb{C}$-automorphism $\sigma$ of $\mathbb{C}[t]$, some $p\in \mathbb{C}^{\times}$, and some monic $c\in \mathbb{C}[t]$ with ${\rm deg}(c) = 1$.
\end{proposition}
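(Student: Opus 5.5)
Proposition \ref{Jordan2000Theorem3.1} is Jordan's theorem, so the plan follows his strategy: build explicit isomorphisms in both directions, working in terms of the Casimir-type normal element.

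\emph{Direction from down-up to ambiskew.} Let $R=A(\alpha,\beta,\gamma)$ with $\beta\neq 0$, and take $\mu_1,\mu_2$ the roots of (\ref{Jordan2000(10)}).

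1. Rewrite (\ref{Jordan2000(1a)}) and (\ref{Jordan2000(1b)}) using $\alpha=-\beta(\mu_1+\mu_2)$ and $\beta\mu_1\mu_2=-1$. The relations then become statements about $t:=du-\mu_1 ud$ (or a suitable scalar shift of it). Concretely, $dt=\mu_2^{-1}td+(\text{scalar})\,d$ and $tu=\mu_2^{-1}ut+(\text{scalar})\,u$.

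2. Translate $t$ by a constant, using $\gamma$ (or use the affine form directly), so that $d$ and $u$ normalize the polynomial subalgebra $\mathbb{C}[t]$. This gives $dt=\sigma(t)d$ and $ut=\sigma^{-1}(t)u$ for an affine automorphism $\sigma$ of $\mathbb{C}[t]$. This is the triangular one-variable case: $t\mapsto \mu t+\nu$.

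3. Set $x=d$ and $y=u$, or the reverse, as the ambiskew convention requires. The relation $yx=pxy+c$ holds with $p=\mu_1$ and $c$ equal to $t$ up to a nonzero scalar. Rescale $t$ so that $c$ is monic of degree $1$.

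4. Show that the assignment is an isomorphism. The map down-up $\to$ ambiskew is well defined because the ambiskew relations imply (\ref{Jordan2000(1a)})–(\ref{Jordan2000(1b)}): substitute $t$ and expand. The inverse sends $t$ to its expression in $d,u$. Both composites are identity on generators.

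\emph{Converse.} Start from $R(\mathbb{C}[t],\sigma,c,p)$ with $c$ monic of degree $1$. Since $\mathbb{C}[t]=\mathbb{C}[c]$, we may take $c=t$, and $\sigma(t)=\mu t+\nu$ since automorphisms of $\mathbb{C}[t]$ are affine. Then $x$ and $y$ generate $R$, because $t=yx-pxy$. Expand $x\,t=\sigma(t)x$ with $t=yx-pxy$; this gives a cubic relation $x^2y\cdot(\ldots)$ of down-up form with $\alpha=\mu+p$, $\beta=-\mu p\neq0$ and $\gamma=\nu$, up to sign conventions. Do the same for $y$. This gives a surjection $A(\alpha,\beta,\gamma)\to R$.

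\emph{Main obstacle: injectivity.} I would prove this by comparing bases. The ambiskew ring has basis $\{t^ix^jy^k\}$, since it is an iterated Ore extension. The down-up algebra with $\beta\neq0$ has the Benkart–Roby PBW-type basis $\{u^i(du)^jd^k\}$. Mapping one basis to the other via $t\leftrightarrow$ an affine function of $du$ gives a triangular change with respect to filtration degree, hence bijectivity. Alternatively, the first direction's map already supplies an inverse, so only well-definedness in both directions needs checking. That reduces the whole argument to the routine relation expansions above. The delicate bookkeeping is matching $p$ and $\mu$ with the roots $\mu_1,\mu_2$.
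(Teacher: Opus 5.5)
Up to two slips with inverses (below), your argument is correct. Its engine is the one in Jordan's proof as sketched in the paper. The element $t$ is a redundant generator, determined by $yx=pxy+c$. After substituting its expression in $d,u$, the two normalizing relations for $t$ become equivalent to (\ref{Jordan2000(1a)}) and (\ref{Jordan2000(1b)}), so the obvious assignments on generators are mutually inverse homomorphisms.

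What differs is the organization. The paper, following Jordan, splits into four cases according to whether $\mu_1=\mu_2$ and whether some $\mu_i=1$. It shifts $t$ by constants such as $\frac{\mu_1\mu_2\gamma}{1-\mu_2}$ so that $\sigma$ becomes diagonal, or a translation when $\mu_1=\mu_2=1$. You keep $\sigma$ affine, $\sigma(t)=\mu_2^{-1}t+\gamma$, which covers every $\beta\neq 0$ at once. This works because the proposition only asks for some automorphism of $\mathbb{C}[t]$, and no division by $1-\mu_i$ ever occurs. The case split buys normal forms for $\sigma$ and the conformal/Casimir descriptions the paper uses afterwards. Your uniform route is shorter and is all the proposition itself needs.

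\emph{First slip.} Since $\mu_1,\mu_2$ are the roots of (\ref{Jordan2000(10)}), their inverses are the roots of $X^2-\alpha X-\beta$. So the element to take is $t=du-\mu_1^{-1}ud$, a scalar multiple of Jordan's $ud-\mu_1 du$. Then $dt=\mu_2^{-1}td+\gamma d$ and $tu=\mu_2^{-1}ut+\gamma u$ hold. With $du-\mu_1 ud$ these identities fail in general, and no constant shift repairs that. With the corrected $t$ and $x=d$, $y=u$, one gets $p=\mu_1$ and $c=-\mu_1t$, as you say.

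\emph{Second slip.} In the converse, expanding $xt=\sigma(t)x$ gives $\alpha=\mu+p^{-1}$, $\beta=-\mu p^{-1}$, $\gamma=-p^{-1}\nu$. Here the paper's convention is used, with $d=x$, $u=y$, $c=t$ and $\sigma(t)=\mu t+\nu$. Your values $\alpha=\mu+p$, $\beta=-\mu p$, $\gamma=\nu$ are the ones for $d=y$, $u=x$ in Jordan's convention $yb=\sigma(b)y$. Either way $\beta\neq 0$, which is what matters.

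\emph{Drop the basis comparison.} As phrased it is inaccurate: $t$ corresponds to a combination of $du$ and $ud$, not to a function of $du$ alone, and the claimed triangularity is not justified. It is also unnecessary. The ring $R(\mathbb{C}[t],\sigma,c,p)$ is presented by $t,x,y$ with its three defining relations. So well-definedness of both maps, plus the fact that the composites fix generators, already proves the isomorphism. In particular, you never need to know beforehand that $t$ generates a polynomial subalgebra of $A(\alpha,\beta,\gamma)$; that follows from the isomorphism.
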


The idea of the proof of Proposition \ref{Jordan2000Theorem3.1} is to show that a down-up algebra $A(\alpha, \beta, \gamma)$ is an ambiskew polynomial ring over the polynomial ring $\mathbb{C}[t]$ with either $\sigma(t) = \lambda t$ for some $\lambda\in \mathbb{C}^{\times}$, or $\sigma(t) = t + \tau$ for some $\tau \in \mathbb{C}$. For the completeness of the paper, let us recall the four cases of down-up algebras depending on the nature of $\mu_1$ and $\mu_2$, as it was made by Jordan \cite[Section 3]{Jordan2000}.

\begin{itemize}
    \item {\em Case 1}. Assume that $\mu_1 \neq \mu_2$ and that $\mu_i \neq 1$ for $i = 1, 2$. In other words, $\alpha^2 + 4\beta \neq 0$ and $\alpha + \beta \neq 1$.
    
    Let $B = \mathbb{C}[t]$ and let $\sigma\in {\rm Aut}_{\mathbb{C}}(\mathbb{C}[t])$ given by $\sigma(t) = \mu_2^{-1}t$. Consider the elements 
    $$
    \rho = \mu_1^{-1} \quad {\rm and} \quad c = -\mu_1^{-1}\left( t +  \frac{\mu_1 \mu_2 \gamma}{ 1 - \mu_2} \right).
    $$

   From \cite[Proposition 1]{DumasJordan1996} we know that $R(B, \sigma, c, p)$ is generated by the indeterminates $x, y$ and $t$ subject to three relations
\begin{align}
    \mu_2 y t = &\, ty, \label{Jordan2000(12a)} \\
    xt = &\, \mu_2 tx, \quad {\rm and} \label{Jordan2000(12b)} \\
    xy - \mu_1 yx = &\, t + \frac{\mu_1 \mu_2 \gamma}{1 - \mu_2}. \label{Jordan2000(12c)}
\end{align}

Jordan asserted that by (\ref{Jordan2000(12c)}) the generator $t$ is redundant, so using (\ref{Jordan2000(12a)}) and (\ref{Jordan2000(12b)}) it follows that $x$ and $y$ satisfy the down-up relations (\ref{Jordan2000(1a)}) and (\ref{Jordan2000(1b)}) with $y = d$ and $x = u$; conversely, to check that if two generators $x, y$ satisfy (\ref{Jordan2000(1a)}) and (\ref{Jordan2000(1b)}), and (\ref{Jordan2000(12c)}) is used to define $t$, then (\ref{Jordan2000(12a)}) and (\ref{Jordan2000(12b)}) hold. These facts imply that $A(\sigma, \beta, \gamma) = R\left( \mathbb{C}[t], \sigma, c, p\right)$. 

Note that $(\mathbb{C}[t], \sigma, c, p)$ is conformal with 
$$
a = \frac{\mu_2}{\mu_2 - \mu_1} t + \frac{\gamma \mu_1 \mu_2}{(1 - \mu_1) (1 - \mu_2)},
$$

and hence 
$$
z = yx - \frac{1}{\mu_2 - \mu_1} t - \frac{\gamma \mu_1 \mu_2}{(1 - \mu_1) (1 - \mu_2)}
$$

is a Casimir element. By Proposition \ref{Jordan2000Proposition2.1}, 
$$
A(\alpha, \beta, \gamma) \cong \Bbbk[t, z](\sigma, z + \sigma(a)) \quad {\rm where} \, \, \sigma(t) = \mu_2^{-1}t \ \  {\rm and} \ \ \sigma(z) = \mu_1^{-1}z.
$$

$A(\alpha, \beta, \gamma)$ has also a second presentation as an ambiskew polynomial ring given by 
$$
R(\mathbb{C}[z], \sigma, c, \mu_2), \quad {\rm where} \, \, \sigma(z) = \mu_1^{-1}z \ \ {\rm and} \ \ c = z + \frac{\mu_1 \mu_2 \gamma}{1 - \mu_1}.
$$

\item {\em Case 2}. Suppose that $\mu_1 = 1 \neq -\frac{1}{\beta} = \mu_2$. Equivalently, $\alpha + \beta = 1$ and $\alpha \neq 2$.

Let $\sigma\in {\rm Aut}_{\mathbb{C}}(\mathbb{C}[t])$ with $\sigma(t) = -\beta t$. Consider 
$$
p = 1 \quad {\rm and} \quad c = -t + \frac{\gamma}{\beta + 1}.
$$

Then $R(B, \sigma, c, p)$ is generated by $x, y$ and $t$ subject to the relations
\begin{align}
    yt = &\ -\beta ty, \label{Jordan2000(14a)} \\
    -\beta xt = &\ tx, \quad {\rm and} \label{Jordan2000(14b)} \\
    xy - yx = &\ t - \frac{\gamma}{\beta + 1}. \label{Jordan2000(14c)}
\end{align}

As in the {\em Case A}, these are the same with $\mu_1 = 1$ and $\mu_2 = -\frac{1}{\beta}$, which implies that $A(\alpha, \beta, \gamma) = R(\mathbb{C}[t], \sigma, c, p)$. To decide if $(\mathbb{C}[t], \sigma, c, p)$ is conformal we consider $\gamma = 0$. 

Suppose that $\gamma = 0$. Then $(\mathbb{C}[t], \sigma, c, p)$ is conformal with $a = \frac{1}{1 + \beta} t$, the Casimir element given by
$$
z = yx + \frac{\beta}{1 + \beta} t = xy - \frac{1}{1 + \beta}t
$$ 

and $yx + \beta xy = (1 + \beta)z$. Again, by Proposition \ref{Jordan2000Proposition2.1}, 
$$
A(\alpha, \beta, \gamma) \cong \mathbb{C}[t, z](\sigma, z + \sigma(a)) \quad {\rm with} \, \, \sigma(t) = -\beta t \, \, {\rm and} \, \, \sigma(z) = z.
$$

As it can be seen, $\sigma$ is triangular of type (a) and $A(\alpha, \beta, \gamma)$ can be expressed as an ambiskew polynomial ring $R(\mathbb{C}[z], \sigma, c, p)$, where
\begin{equation}
    \sigma(z) = z, \, \, p = -\beta \quad {\rm and} \quad c = (1 + \beta)z.
\end{equation}

In the case $\gamma \neq 0$ (we may suppose that $\gamma = 1$),  for $a \in \mathbb{C}[t]$, the element $\sigma(a) - a$ has zero constant term, which means that $(\mathbb{C}[t], \sigma, c, p)$ is not conformal. If $f = (\beta + 1)\omega + \beta t - 1$, then $\mathbb{C}[t, \omega] = \mathbb{C}[t, f]$ and $\sigma(f) = f + 1$, due to Proposition \ref{Jordan2000Proposition2.1}, 
\begin{equation}\label{Jordan2000(17)}
    A(\alpha, \beta, \gamma) \cong \mathbb{C}[t, f](\sigma, \omega) \quad {\rm with} \, \, \sigma(t) = -\beta t \, \, {\rm and} \, \, \sigma(f) = f + 1.
\end{equation}

In this case, $\sigma$ is triangular of type (b). An alternative presentation of $A(\alpha, \beta, \gamma)$ as an ambiskew polynomial ring can be obtained in the following way. Let 
$$
h = \beta^{-1}(f + (\beta + 1^{-1})) = \beta^{-1} \left((\beta + 1)yx + \beta t - \frac{\beta}{1 + \beta}\right)
$$

From expression (\ref{Jordan2000(14c)}) for $t$, it follows that $h = xy + \beta^{-1} yx$, and so $\sigma(h) = h + \beta^{-1}$, whence $h, x$ and $y$ satisfy the relations
\begin{align}
hy - yh = &\, -\beta^{-1}y, \label{Jordan2000(18a)} \\
hx - xh = &\ \beta^{-1}x, \quad {\rm and} \label{Jordan2000(18b)} \\
yx + \beta xy = &\ \beta h, \label{Jordan2000(18c)}
\end{align}

and
\begin{align}
A(\alpha, \beta, \gamma) = R(\mathbb{C}[h], \sigma, \beta h, -\beta) \quad {\rm where} \, \, \sigma(h) = h + \beta^{-1}.
\end{align}

With this presentation, the four-tuple $(\mathbb{C}[h], \sigma, \beta h, -\beta)$ is conformal with 
$$
a = \frac{\beta}{1 + \beta}h - \frac{1}{(1 + \beta)^2}
$$

and Casimir element given by 
$$
z = yx - \frac{\beta}{1 + \beta}h + \frac{1}{\beta(1 + \beta)^2}.
$$

This presentation is similar to the presentation of $A(1 - \beta, \beta, 1)$ as an iterated skew polynomial ring formulated by Carvalho and Musson \cite{CarvalhoMusson2000}.

\item {\em Case 3}. Now, we assume that $\mu_1 = \mu_2 \neq 1$ and write $\mu = \mu_1$. Then $\alpha^2 + 4\beta = 0, \, \mu = \frac{2}{\alpha}, \, \alpha \neq 2$ and $\mu^2 = -\frac{1}{\beta}$.

Let $\sigma\in {\rm Aut}_{\mathbb{C}}(\mathbb{C}[t])$ defined by $\sigma(t) = \mu^{-1}t$, Consider 
$$
p = \mu^{-1} \quad {\rm and} \quad c = -\mu^{-1} \left(t + \frac{\mu^2 \gamma}{1 - \mu}\right).
$$

The defining equations for $R(B, \sigma, c, p)$ are
\begin{align}
\mu yt = &\ ty,  \label{Jordan2000(20a)} \\
xt = &\ \mu tx, \quad {\rm and} \label{Jordan2000(20b)} \\
xy - \mu yx = &\ t + \frac{\mu^2 \gamma}{1 - \mu}.
\end{align}

If we compare with {\em Case A}, then we get that $\mu_1 = \mu_2 = \mu$, so that $R(B, \sigma, c, p) = A(\alpha, \beta, \gamma)$.

Since for $a\in \mathbb{C}[t]$, $\sigma(a) - pa$ has zero coefficient of $t$, $(\mathbb{C}[t], \sigma, , c, p)$ is not conformal. By Proposition \ref{Jordan2000Proposition2.1},
$$
A(\alpha, \beta, \gamma) \cong \mathbb{C}[t, \omega] (\sigma, \omega) \ {\rm with} \ \ \sigma(t) = \mu^{-1}t \ \ {\rm and} \ \ \sigma(\omega) = \mu^{-1} \left(\omega - \mu^{-1}t - \frac{\mu^2 \gamma}{1 - \mu} \right).
$$

With the aim of showing that $\sigma$ is conjugate to a triangular automorphism of type (c), let $h = \omega - \frac{\mu^2 \gamma}{(1 - \mu)^2}$. Then $\mathbb{C}[t, \omega] = \mathbb{C}[t, h]$ and
\begin{equation}
A(\alpha, \beta, \gamma) \cong \mathbb{C}[t, h](\sigma, \omega) \quad {\rm where} \, \, \sigma(t) = \mu^{-1}t \ \ {\rm and} \ \  \sigma(h) = \mu^{-1}(h - \mu^{-1}t).
\end{equation}

\item {\em Case 4}. Finally, consider $\mu_1 = \mu_2 = 1$. Then $\alpha = 2$ and $\beta = -1$. Let $\sigma\in {\rm Aut}_{\mathbb{C}}(\mathbb{C}[t])$ with $\sigma(t) = t - \gamma$. Consider $p = 1$ and $c = -t$. Then $R(B, \sigma, c, p)$ is generated by $x, y$ and $t$ subject to the relations 
\begin{align}
yt = &\ ty - \gamma y, \\
xt = &\ tx + \gamma x, \quad {\rm and} \\
xy - yx = &\ t.
\end{align}

Using a similar reasoning to the presented in {\em Case A}, it can be seen that $R(B, \sigma, c, p) = A(\alpha, \beta, \gamma)$. If $\gamma = 1$, then three equations are the same that those obtained by setting $\beta = -1$ and $t = h$ in (\ref{Jordan2000(18a)}), (\ref{Jordan2000(18b)}) and (\ref{Jordan2000(18c)}).

Suppose that $\gamma \neq 0$ (we may assume that $\gamma = 1$). After some computations, we get that $A(2, -1, 1) \cong U(\mathfrak{sl}_2(\mathbb{C}))$. $(B, \sigma, c, p)$ is conformal with 
$$
a = \frac{1}{2}(t^2 + t) \ \ {\rm and \ Casimir \ element} \ \ z = yx - \frac{1}{2} (t^2 - t).
$$

Once again, from Proposition \ref{Jordan2000Proposition2.1} it follows that
\begin{equation}
A(\alpha, \beta, \gamma) \cong \Bbbk[t, z] (\sigma, z + \sigma(a)) \quad {\rm with} \, \, \sigma(t) = t - 1 \ \ {\rm and} \ \ \sigma(z) = z.
\end{equation}

In this way, $\sigma$ is triangular of type (b) with $\lambda = 1$.

Note that if $\gamma = 0$, $(\mathbb{C}[t], \sigma, c, p)$ is not conformal because $\sigma(a) - pa = 0$ for all $a\in \mathbb{C}[t]$. Once again, Proposition \ref{Jordan2000Proposition2.1} implies that
$$
A(\alpha, \beta, \gamma) \cong \Bbbk[t, \omega](\sigma, \omega) \quad {\rm with} \quad \sigma (t) = t \, \, {\rm and} \, \,  \sigma(\omega) = \omega - t.
$$

Note that $\sigma$ is triangular of type (c) with $\lambda = \mu  = 1$, and $A(\alpha, \beta, \gamma)$ is isomorphic to the universal enveloping algebra of the Heisenberg Lie algebra.
\end{itemize}

In the following, we show in detail that Theorem \ref{SiSigma} cannot be applied to any of the four previous cases.

\begin{itemize}
    \item {\em Case 1}. Both presentations have diagonal automorphism on the polynomial base, namely $\sigma(t)=\mu_2^{-1}t$ and $\sigma(z)=\mu_1^{-1}z$, so condition (3) of Theorem \ref{SiSigma} is satisfied. However, in both realizations the corresponding element $u-\rho\sigma(u)$ is not an element of the field $\Bbbk$:
\[
c = -\mu_1^{-1}\left(t+\frac{\mu_1\mu_2\gamma}{1-\mu_2}\right)\notin \Bbbk \quad \text{and} \quad c = z+\frac{\mu_1\mu_2\gamma}{1-\mu_1}\notin \Bbbk.
\]

Hence, condition (2) fails.

\item {\em Case 2}. When $\gamma=0$ the presentation over $\mathbb{C}[z]$ has $\sigma(z)=z$, condition {\rm (3)} holds but $c=(1+\beta)z\notin \Bbbk$, and therefore condition {\rm (2)} fails. When $\gamma\neq 0$, the alternative presentation
\[
A(\alpha,\beta,\gamma)=R(\mathbb{C}[h],\sigma,\beta h,-\beta) 
\quad {\rm with} \quad \sigma(h) = h+\beta^{-1},
\]

shows that the automorphism is no longer diagonal on the polynomial generator, so condition {\rm (3)} fails; moreover, $c=\beta h\notin \Bbbk$.

\item {\em Case 3}. The presentation over $\mathbb{C}[t]$ has diagonal automorphism $\sigma(t)=\mu^{-1}t$, but
\[
c=-\mu^{-1}\left(t+\frac{\mu^2\gamma}{1-\mu}\right)\notin \Bbbk,
\]

so condition {\rm (2)} fails. On the other hand, the conjugated presentation
\[
A(\alpha,\beta,\gamma)\cong \mathbb{C}[t,h](\sigma,\omega) \quad {\rm with} \quad \sigma(t) = \mu^{-1}t, \quad \sigma(h) = \mu^{-1}(h-\mu^{-1}t),
\]

shows that the automorphism is triangular rather than diagonal, so condition {\rm (3)} is no longer satisfied in that realization.

\item {\em Case 4}. If $\gamma\neq 0$ then already in the presentation over $\mathbb{C}[t]$ one has $\sigma(t)=t-\gamma$, which is translational, whence condition {\rm (3)} fails. If $\gamma=0$, then $\sigma(t)=t$ but $c=-t\notin \Bbbk$, so condition {\rm (2)} fails. Moreover, the alternative presentation
\[
A(\alpha,\beta,\gamma)\cong \Bbbk[t,\omega](\sigma,\omega) \quad {\rm with} \quad \sigma(t) = t, \quad \sigma(\omega) = \omega-t,
\]

again falls outside the diagonal setting required in condition {\rm (3)}.
\end{itemize}

Therefore, the down-up algebras lie just beyond the scope of Theorem \ref{SiSigma}.

\section{Future work}\label{Futurework}

{\em Generalized down-up algebras} ({\em GDU algebras} for short) were introduced by Cassidy and Shelton \cite{CassidyShelton2004} as a generalization of the down-up algebras considered in Section \ref{DownupalgebrasBenkartRoby1998}.

Following \cite[Section 2]{CassidyShelton2004}, let $\Bbbk$ be an algebraically closed field of characteristic zero. Fix scalars $r,s, \gamma \in \Bbbk$ and a polynomial $f\in \Bbbk[x]$. A {\em generalized down-up algebra} $L := L(f, r, s, \gamma)$ is a unital associative $\Bbbk$-algebra generated by $d, u$ and $h$, subject to relations
\begin{align}
    dh - rhd + \gamma d = &\, 0, \\
    hu - ruh + \gamma u = &\, 0, \quad {\rm and} \\
    du - sud + f(h) = &\, 0.
\end{align}   

When $f$ has degree one, all down-up algebras in \cite{Benkart1998, BenkartRoby1998} are retrieved. Cassidy and Shelton \cite{CassidyShelton2004} showed that $L$ has Gelfand-Kirillov dimension three and it is Noetherian if and only if it is a domain, or equivalently, $rs \neq 0$

Almulhem and Brzezi\'nski \cite[Section 1]{AlmulhemBrzezinski2019} asserted on the GDU algebras that \textquotedblleft It is natural to ask whether they are differentially smooth. In this note we do not attempt to answer this question, but rather provide the first paving stones for a path that might lead to an answer by constructing a class of skew derivations.\textquotedblright \ We have searched the literature and apparently no work has been done that studies the differential smoothness of these algebras. Due to the relationships between generalized Weyl algebras and GDU algebras, and the study of the automorphisms and the derivations of these two families of algebras carried out in \cite{Brzezinski2016, CarvalhoLopes2009, CassidyShelton2004, Kirkmanetal1999}, we consider that an immediate task is to address this investigation, so that we can formulate sufficient conditions that guarantee the smoothness of GDU algebras, and as a consequence, we can obtain some conclusions that were pending at the end of Section \ref{DownupalgebrasBenkartRoby1998} when we show the limitations of Theorem \ref{SiSigma}.

\section{Declarations}

The authors have no conflict of interest to disclose.

\end{document}